\newtheorem{thm}{Theorem}[section]
\newtheorem{cor}{Corollary}[section]
\newtheorem{lm}{Lemma}[section]
\newtheorem{defi}{Definition}[section]
\newtheorem{prop}{Proposition}[section]
\begin{document}
\graphicspath{{./Figs/}}

\title{\bf  Hypergraph Analysis Based on a Compatible Tensor Product Structure}

\author{Jiaqi Gu \footnote{E-mail: 20110180020@fudan.edu.cn. School of Mathematical Sciences, Fudan University, Shanghai, 200433, P. R. of China. This author is supported by the National Natural Science Foundation of China under grant 12271108 and Innovation Program of Shanghai Municipal Education Commission. } 
\quad    Shenghao Feng \footnote{E-mail: 21110180034@fudan.edu.cn. School of Mathematical Sciences, Fudan University, Shanghai, 200433, P. R. of China. This author is supported by  the National Natural Science Foundation of China under grant 12271108 and
	Shanghai Municipal
	Science and Technology Commission under grant 23WZ2501400.}
\quad	 Yimin Wei \footnote{Corresponding author (Y. Wei). E-mail: ymwei@fudan.edu.cn and yimin.wei@gmail.com. School of Mathematical Sciences and Shanghai Key Laboratory of Contemporary
	 	Applied Mathematics, Fudan University, Shanghai, 200433, P. R. China. This author is supported by  the National Natural Science Foundation of China under grant 12271108 and Innovation Program of Shanghai Municipal Education Commission.
 }}


\maketitle

 \begin{abstract} 
 We propose a tensor product structure that is compatible with the hypergraph structure. We define the algebraic connectivity of the $(m+1)$-uniform hypergraph in this product, and prove the relationship with the vertex connectivity. We introduce some connectivity optimization problem into the hypergraph, and solve them with the algebraic connectivity. We introduce the Laplacian eigenmap algorithm to the hypergraph under our tensor product.

 	\bigskip

     \noindent
     {\bf Keywords: } Uniform Hypergraph, tensor product, algebraic connectivity, connectivity optimization, Laplacian eigenmap 
    
     \noindent
{\bf AMS subject classification:} \, 15A18,  65F15,  65F10
    
 \end{abstract}

\section{Introduction}
Hypergraph is a generalization of the graph. Many complex systems can be modeled by the hypergraph, and the hypergraph is widely used in many areas. Compared with graphs, the hypergraph generalizes the concept of the edge, enabling a hyperedge to contain more than two vertices. Therefore hypergraph models have much more flexibility than graph models. Some relationships that involves more than 2 people, such as the co-author relationship and the contacts among students \cite{benson2018simplicial}, cannot be directly described by a graph, but can be naturally modeled by a hypergraph. Such flexibility increases the difficulty when analyzing the properties of the hypergraph. 
For the recent years, scientists from computer science \cite{2016Directed,1993Directed}, complex networks \cite{2008Hypernetworks,2016Generalized} and mathematics \cite{Keqin1996Spectra,2012Algebraic,2014Regular,Sun2016moore} have been focusing on the hypergraph. The intention and tools may differ as the background of the study differs, leading to diverse structures and analysis.

The connectivity is a major concern in the theoretical analysis of both the graph and the hypergraph. In graph theory, the connectivity of a connected graph can be further measured by the edge connectivity and the vertex connectivity \cite{Bapat2014graphs}. The connectivity problems have many variations, like minimum cut or splitting, maximum flow and so on. In complex network, connectivity means the robustness. The robustness describes the ability to resist attacks that can be modeled by removing a vertex or an edge. Therefore the robustness can be described by the edge connectivity and the vertex connectivity. However, computing the edge connectivity or the vertex connectivity is quite difficult. Also, the edge connectivity and the vertex connectivity are discrete, therefore are too rough when analyzing some small perturbations of the graph. Fiedler propose the algebraic connectivity of a graph and proved the relationship among the algebraic connectivity, the vertex connectivity and the edge connectivity \cite{1973Algebraic} and fix the two shortcomings. In complex network, there are many studies using the algebraic connectivity as a measure of the robustness of networks \cite{ghosh2006growing,rewiring2008,bisection2010,sydney2013optimizing,wei2014algebraic,cheung2021improving,mackay2021finding, wei2013algebraic}. 
Some results about the convergence in the consensus problems also involves the algebraic connectivity \cite{qi2018consensus,kar2007consensus,kar2008sensor}. 
There are also some other applications of the algebraic connectivity in the complex system. According to \cite{nagarajan2015maximizing}, there are different applications of the algebraic connectivity in vehicle localization, unmanned aerial vehicles rigid formations and UAVs network synthesis with data congestion. Maximizing the algebraic connectivity under some constraints is a common problem in network science. There are also theoretical studies on maximizing the algebraic connectivity over some certain families of graphs \cite{maximizing2017,kolokolnikov2015maximizing}.

In the hypergraph analysis, there are different approaches. There are some matrix-based approaches \cite{2016Generalized,1993Directed}, which generalize the adjacency matrix to the hypergraph using the incident matrix. Graph reduction techniques \cite{HypergraphCuts,lawler1973cutsets,agarwal2006higher} split a hyperedge into several edges, reducing a hypergraph into a graph with multiple edges. Another method is the tensor analysis. The same as the adjacency matrix, an $m$-uniform hypergraph with $n$ vertices is naturally corresponding to an $m$th order $n$-dimensional tensor. Hu and Qi \cite{2012Algebraic} represented a $2m$-uniform hypergraph as a tensor and utilized the H-eigenvalues and the Z-eigenvalues \cite{Qi2017tensor,Wan2022SpectraOW,Xie2016Spectral} to define the algebraic connectivity. There are also some studies based on the Einstein product of tensors \cite{chen2019multilinear,MultilinearControl}. 
In this paper we propose a new product between tensors to fit the structure of the hypergraph. This leads to the following contributions.

\begin{enumerate}
    \item We propose a new product between tensors. This product is compatible with the structure of the hypergraph. Furthermore, this product also works between a vector and a tensor. We propose the quadratic form, the eigenvalue and eigenvector under this product. 
    \item We define the algebraic connectivity of the hypergraph under this product and prove the relationship between the algebraic connectivity and the vertex connectivity. We analyze the influence on the algebraic connectivity when adding a hyperedge based on the Fielder vector. We generalize two classic problems in complex network to the hypergraph.
    \item We make some numerical examples for the algebraic connectivity to illustrate our results. We make some numerical examples for the two hypergraph problems.
    \item We propose the hypergraph Laplacian eigenmap algorithm under out tensor product structure, and make some numerical examples showing its advantages.
\end{enumerate}

In some cases, our analysis seems to be equivalent to the clique reduction technique \cite{HypergraphCuts,lawler1973cutsets}. However, an original hypergraph and its tensor description keeps much more information than the reduced graph and its adjacency matrix.


\section{Preliminaries}
In this section, we provide basic notations throughout this paper.
\subsection{Hypergraph}
    The same as graphs, a hypergraph $\mathcal{G}=(V,\mathcal{E})$ consists of two parts: the set of vertices $V$ and the set of hyperedges $\mathcal{E}$. A hyperedge $E_i \subseteq V$ consists of several vertices. A hypergraph is $m$-uniform if all its hyperedges contain the same number of vertices, i.e., $\vert E_i\vert=m$ for all $E_i\in\mathcal{E}$. Specially, a graph can be regarded as a $2$-uniform hypergraph. For an undirected hypergraph, each hyperedge is an unordered set. For a directed hypergraph, its hyperedge $E_i=(T_i,H_i)$ is further divided into two parts: the head $H_i$ and the tail $T_i$ \cite{2016Directed,1993Directed,2008Hypernetworks}. We follow the definition in \cite{ausiello2001directed,Xie2016Spectral} that assumes $\vert H\vert=1$, or the definition of the forward arc in \cite{2016Directed,1993Directed,Cambini1997FlowsOH}. A directed hyperedge is an ordered pair $E=(T,h)$, where the unordered set $T\subset V$ is the tail and $h\in V\backslash T$ is the head. In the following part, we will use the hypergraph to refer to an undirected hypergraph, and will use the directed hypergraph to clarify the directed property.
    
    A path in graph theory can be generalized to the hypergraph as a hyperpath \cite{Li2016Extremal}. We use $[n]$ as an abbreviation for the set $\{1,2,\dots,n\}$. For the undirected hypergraph, a hyperpath of length $l$ is a sequence of hyperedges and vertices $<E_1,v_1,E_2,v_2,\dots,v_{l-1},E_l>$ such that 
    \begin{equation*}
        v_i\in E_i\cap E_{i+1},\ \forall i\in[l-1],
    \end{equation*}
    and $v_i\neq v_{i+1}$ for all $i\in[l-2]$.
    For the directed hypergraph, a hyperpath $<E_1,v_1,E_2,\dots,v_{l-1},E_l>$ of length $l$ satisfies
    \begin{equation*}
        v_i=h_i {\ \rm and\ } v_i\in T_{i+1},\ \forall i\in[l-1].
    \end{equation*}
    As hyperedges forbid duplicate vertices, the property $v_i\neq v_{i+1}$ still holds for the directed hypergraph as $v_i=h_i\in T_{i+1}$ and $v_{i+1}=h_{i+1}\notin T_{i+1}$.
    
    Based on this we can define the product of two $(m+1)$-uniform hypergraphs. In graph theory, the matrix product of two graphs is defined based on their adjacency matrices \cite[Definition 17.3]{Sudhakara2023ProductGM}. 
    Suppose $G_1=(V,E_1)$ and $G_2=(V,E_2)$ are defined on the same vertex set $V$ and have their adjacency matrices $A_1$ and $A_2$. Then the product matrix $A_3=A_1A_2$ also corresponds to a graph $G_3=(V,E_3)$ that allows multiple edges and self-loops. $G_3$ is exactly the matrix product of $G_1$ and $G_2$, and every edge $e=(v_i,v_j)$ in $E_3$ corresponds to a path $[(v_i,v_k),(v_k,v_j)]$ of length $2$. In the path, the first edge $(v_i,v_k)$ comes from $G_1$ and the second $(v_k,v_j)$ comes from $G_2$. Or saying, $(v_i,v_k)\in E_1$ and $(v_k,v_j)\in E_2$. Specially, if $G_1=G_2$, then we have a conclusion \cite[Lemma 8.1.2]{Godsil2001Algebraic}.
    \begin{thm}
        Suppose $G$ is a directed graph and $A$ is its adjacency matrix. Then the number of walks from $v_i$ to $v_j$ with length $r$ is $(A^r)_{ij}$. 
    \end{thm}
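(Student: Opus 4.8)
The plan is to prove this by induction on the walk length $r$, exploiting the fact that any walk of length $r+1$ decomposes uniquely into a walk of length $r$ followed by a single edge, and that this decomposition mirrors exactly the summation appearing in matrix multiplication.

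First I would settle the base case $r=1$. By the definition of the adjacency matrix of a directed graph, $A_{ij}$ counts the edges directed from $v_i$ to $v_j$ (this is $0$ or $1$ for a simple digraph, or the edge multiplicity if parallel edges are permitted), which is precisely the number of walks of length $1$ from $v_i$ to $v_j$. Hence the claim holds for $r=1$, where $(A^1)_{ij}=A_{ij}$.

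For the inductive step, I would assume that the number of walks of length $r$ from $v_i$ to $v_k$ equals $(A^r)_{ik}$ for every pair of vertices. A walk of length $r+1$ from $v_i$ to $v_j$ is specified by choosing its penultimate vertex $v_k$, a walk of length $r$ from $v_i$ to $v_k$, and an edge from $v_k$ to $v_j$; conversely, each such choice produces a distinct walk of length $r+1$. Summing over all intermediate vertices $v_k$ then gives
\[
\#\{\text{walks of length } r+1 \text{ from } v_i \text{ to } v_j\}=\sum_k (A^r)_{ik}\,A_{kj}=(A^{r+1})_{ij},
\]
where the first equality invokes the induction hypothesis together with the base case, and the second is the definition of the $(i,j)$ entry of the product $A^r A$. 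This closes the induction and establishes the theorem for all $r$.

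The step requiring the most care is the counting argument in the inductive step: one must check that partitioning the walks of length $r+1$ according to their penultimate vertex $v_k$ is both exhaustive and non-overlapping, so that the subcounts genuinely add rather than double-count, and that concatenating a length-$r$ walk with an outgoing edge always yields a valid walk of the correct length. Once this partition is recognized as a bijection onto the index set of the matrix-product sum, the combinatorial count and the algebraic identity coincide, and the result follows at once.
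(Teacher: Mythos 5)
Your proof is correct: the paper does not prove this statement itself but cites it as a known result (Lemma 8.1.2 of Godsil and Royle), and the standard proof of that cited lemma is exactly your induction on $r$ via decomposing each walk of length $r+1$ at its penultimate vertex. Nothing is missing, and the counting step you flag as delicate is handled properly.
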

    
    The product of two $(m+1)$-uniform hypergraphs is similar. 
    The product of $\mathcal{G}_1=(V_1,\mathcal{E}_1)$ and $\mathcal{G}_2=(V_2,\mathcal{E}_2)$ is an $(m+1)$-uniform hypergraph $\mathcal{G}=(V,\mathcal{E})$. For a directed hypergraph, $E=(T,h)\in \mathcal{E}$ if and only if there exists a hyperpath of length two $<E_1=(T,h_1),h_1,E_2=(T_2,h)>$ such that $E_i\in\mathcal{E}_i$ for $ i=1,2$. For an undirected hypergraph, we only need to split every hyperedge $E=\{v_{i_1},\dots,v_{i_{m+1}}\}$ into $(m+1)$ directed hyperedges $\{(E\backslash\{v_{i_j}\},v_{i_j})\}_{j=1}^{m+1}$, and the other part is the same. 
    
    The connectivity of the hypergraph can be defined via the hyperpath. For the undirected graph, two vertices $v_0,v_l$ is connected if there exists a hyperpath 
    $<E_1,v_1,\dots,v_{l-1},E_l>,$
    such that $v_0\in E_1$ and $v_l\in E_l$.
    Whether $v_0=v_1$ is not essential. If $v_0=v_1$, we can remove $E_1$ and $v_1$ from the hyperpath. This also works on $v_{l-1}=v_{l}$. Therefore without loss of generality we may assume $v_0\neq v_1$ and $v_{l-1}\neq v_{l}$ to be compatible with the definition of the hyperpath. Further we say an undirected hypergraph to be connected if every two vertices are connected. 
    For the directed hypergraph, a vertex $v_0$ has access to a vertex $v_l$ if there exists a hyperpath 
    $ <E_1,v_1,\dots,v_{l-1},E_l>,$
    such that $v_0\in T_1$ and $v_l=h_l$. We use $v_i \to v_j$ to represent this. The connectivity can be further distinguished. A directed hypergraph is said to be
    \begin{itemize}
        \item strongly connected, if every vertex $v_i$ has access to every vertex $v_j$;
        \item one-way connected, if for every two vertices $v_i$ and $v_j$, $v_i\to v_j$ or $v_j\to v_i$;
        \item weak connected, if the base hypergraph (switching every directed hyperedge $E=(T,h)$ into an undirected hyperedge $T\cup\{h\}$) is connected.
    \end{itemize}
    
    For a connected hypergraph, the connectivity can be further measured by the vertex connectivity and the edge connectivity. A cutset $\widehat{V}\subseteq V$ of an undirected hypergraph $\mathcal{G}$ is a subset of $V$ such that if we remove the vertices in $\widehat{V}$ and all their incident hyperedges from $\mathcal{G}$, it will not be connected. The vertex connectivity $v(\mathcal{G})$ is the minimum size of the cutset. Let $\mathcal{G}(V\backslash\widehat{V})$ be the hypergraph induced by $V\backslash\widehat{V}$, or saying, removing the vertices in $\widehat{V}$ and all their incident hyperedges from $\mathcal{G}$. Then the vertex connectivity can be represent as
    \begin{equation*}
        v(\mathcal{G})=\min_{\widehat{V}\subset V} \{\vert \widehat{V}\vert:\  \mathcal{G}(V\backslash\widehat{V}) {\rm\ is\ not\ connected}\}.
    \end{equation*}
    For the directed hypergraph, removing a cutset will break the weak connectivity.
    
    The hyperedge connectivity $e(\mathcal{G})$ is similar. $e(\mathcal{G})$ is the minimum number of hyperedges we need to remove to break the (weak) connectivity of a (directed) hypergraph.
    \begin{equation*}
        e(\mathcal{G})=\min_{\widehat{\mathcal{E}}\subset\mathcal{E}} \{\vert\widehat{\mathcal{E}}\vert:\ \widehat{\mathcal{G}}=(V,\mathcal{E}\backslash\widehat{\mathcal{E}}) {\rm\ is\ not\ connected}\}.
    \end{equation*}
    The vertex connectivity and the edge connectivity have some variations. The min-cut problem asks to split a graph (hypergraph) into two parts while minimizing the cost of edges (hyperedges) between these two parts, which is a variation of the edge connectivity. In the robust analysis of a complex network, the attacks on a vertex (hyperedge) will disable it, therefore leads to the vertex (hyperedge) connectivity. 
  
\subsection{Tensor and Existing Tensor Products}
    \label{section2.2}
    Tensor is a high order generalization of the matrix, and a matrix is called a second order tensor. A tensor \cite{Che2020theory,Qi2017tensor,Wei2016theory} is a multidimensional array. The order is the number of its indices.  A tensor is denoted by calligraphic letters $\mathcal{A},\mathcal{B}$ and so on. 
    The set of all the $m$th order $n$-dimensional real tensors is denoted as $T_{m,n}$. 
    
    There are different structures for tensor analysis. Based on the high order homogeneous polynomial, a tensor-vector product between $\mathcal{A}\in T_{m+1,n}$ and $x\in\mathbb{R}^n$ can be defined \cite{Qi2017tensor} as
    \begin{equation*}
        \mathcal{A}x^m=(\sum_{i_1,\dots,i_m=1}^n a_{i_1,\dots,i_m,j}x_{i_1}\cdots x_{i_m})_{j=1}^m \in\mathbb{R}^n.
    \end{equation*}
    Based on this the Z-eigenvalue and the Z-eigenvector $(\lambda,x)$ is defined \cite{Qi2017tensor} as
    \begin{equation*}
        \mathcal{A}x^{m}=\lambda x,\ \Vert x\Vert_{\rm 2}=1.
    \end{equation*}
    The H-eigenvalue and the H-eigenvector $(\lambda,x)$ is introduced \cite{Qi2017tensor} as 
    \begin{equation*}
        \mathcal{A}x^{m}=\lambda x^m=\lambda (x_1^m,\dots,x_n^m)^\top.
    \end{equation*}
    The same as the quadratic form of matrices, a homogeneous polynomial is defined as
    \begin{equation*}
        \mathcal{A}x^{m+1}=x^\top(\mathcal{A}x^m)=\sum_{i_1,\dots,i_{m+1}=1}^n a_{i_1,\dots,i_m,i_{m+1}}x_{i_1}\cdots x_{i_{m+1}}.
    \end{equation*}
    Only when $(m+1)$ is even, it can be positive or negative definite, as a polynomial of odd degree can never be positive or negative definite.
    
    For the $2m$th order $n$-dimensional tensors, there is another structure, called the Einstein product. It is first proposed by Nobel laureate Einstein in \cite{albert1916foundation} and there are some studies based on this structure \cite{2013Solving,Miao2020fourth}. The Einstein product of  tensors $\mathcal{A} \in \mathbb{R}^{n_1\times \cdots \times n_k \times p_{1} \times  \cdots \times p_m}$ and $\mathcal{B} \in \mathbb{R}^{p_{1}\times \cdots \times p_m \times q_{1} \times \cdots  \times q_l}$ is a tensor in $\mathbb{R}^{n_1\times \cdots \times n_k \times q_{1}\times \cdots \times q_l}$ such that
    \begin{equation*}
        (\mathcal{A} *_m \mathcal{B})_{i_1,\dots,i_k, j_{1}, \dots,j_l} =\sum_{t_1,t_2,\dots,t_m} a_{i_1,\dots, i_k, t_1,\dots,t_m} b_{t_1,\dots,t_m, j_{1}, \dots,j_l}.
    \end{equation*}
    The ring $(T_{2m,n},+,*_m)$ is isomorphism to the matrix ring $(\mathbb{R}^{n^m\times n^m},+,\cdot)$ \cite{2013Solving}. The identity tensor $\mathcal{I}$ satisfies $\mathcal{I}_{i_1,\dots,i_m,i_1,\dots,i_m}=1$ for $1\leq i_1,\dots,i_m\leq n$ and all the other elements are $0$. The generalized eigenvalue and the eigentensor \cite{Wang2022GeneralizedEF} $(\lambda,\mathcal{X})$ is defined as 
    \begin{equation*}
        \mathcal{A}*_m \mathcal{X}=\lambda\mathcal{B}*_{m}\mathcal{X},
    \end{equation*}
    where $\mathcal{X}\in T_{m,n}$. When $\mathcal{B}=\mathcal{I}$, the generalized  eigenvalue problem is just the classic eigenvalue problem.

\subsection{Laplacian Matrix of Graph, Algebraic Connectivity and Laplacian Eigenmap}
    In graph theory, for a graph $G=(V,E)$ with $n$ vertices, its adjacency matrix is $A\in\mathbb{R}^{n\times n}$, such that $a_{ij}=1$ if $(i,j)\in E$ and $a_{ij}=0$ otherwise. If we allow multiple edges, then $a_{ij}$ is the number of edges $(i,j)$ in $E$. Its degree diagonal matrix is $D={\rm diag}(\{d_i\}_{i=1}^n)$. $d_i$ is the degree of the vertex $v_i$, satisfying $(d_1,\dots,d_n)^\top=A{\bf 1}$, where $\bf 1$ is the all-one column vector.
    For a directed graph, $d_i$ is the outdegree. The Laplacian matrix is defined as $L=D-A$ for both the directed and undirected graph.
    
    The Laplacian matrix processes many interesting properties. The Laplacian matrix is diagonally dominant and all its diagonal elements are non-negative. For an undirected graph $G$, its Laplacian matrix is symmetric, therefore positive semi-definite. By its definition we have $L{\bf 1}=0$, therefore $\bf 1$ is an eigenvector corresponding to the smallest eigenvalue $0$. The second smallest eigenvalue $a(G)$ of $L$ is the algebraic connectivity of $G$, which is proposed by Fiedler in \cite{1973Algebraic}. It can also be expressed by the Courant–Fischer theorem \cite{Bapat2014graphs,1973Algebraic}. Let ${\bf E}={\rm span}\{\bf 1\}$. We have \cite{1993Directed,wu2005algebraic}
    \begin{equation}
       a(G)=\min_{\|x \|_2=1, \atop x  \in {\bf E}^{\perp} } x^\top Lx.
       \label{agdef}
    \end{equation}
    For a directed graph, its algebraic connectivity is defined as \eqref{agdef} in \cite{wu2005algebraic}. However, as $L$ is not symmetric, $a(G)$ may be negative, as illustrated by the disconnected graph in \cite{wu2005algebraic}. For a real symmetric matrix $A$ of order $n$, let the eigenvalues of $A$ be arranged as:
    $ \lambda_1(A) \leq \lambda_2(A) \leq \cdots \leq \lambda_n(A) $. It is well known that \cite[Lemma 3]{wu2005algebraic}
    \begin{equation*}
            \lambda_1\left(\frac{1}{2}\left(L+L^\top\right)\right) \leq a(G) \leq \lambda_2\left(\frac{1}{2}\left(L+L^\top \right)\right).
    \end{equation*}
    
    The algebraic connectivity is widely used as a measure of the robustness of complex networks \cite{ghosh2006growing,rewiring2008,bisection2010,sydney2013optimizing,wei2014algebraic,cheung2021improving,mackay2021finding}. When adding an edge, the algebraic connectivity helps to measure the increment of the robustness of the network. In the consensus problem of the graph, the convergence speed of a linear system
    \begin{equation}
        \dot{x}(t)=-Lx(t)
        \label{gcp}
    \end{equation}
    can be measured by the algebraic connectivity, the second smallest eigenvalue of $L$ \cite{qi2018consensus}.

    The same as the graph, a tensor can be derived from a (directed) uniform hypergraph, called the adjacency tensor. For an undirected $(m+1)$-uniform hypergraph $\mathcal{G}=(V,\mathcal{E})$ with $|V|=n$, its adjacency tensor \cite{Cooper2012spectra} is an $(m+1)$th order $n$-dimensional tensor $\mathcal{A}=\left(a_{i_1,\dots,i_m,i_{m+1}}\right)$, satisfying.
    \begin{equation}
        a_{i_1,\dots,i_m,i_{m+1}}=
        \left\{
        \begin{array}{cc}
            \frac{1}{m!} & {\rm If\ } \{i_1,\dots,i_{m+1}\}\in\mathcal{E}\\
            0 & {\rm otherwise.}
        \end{array}
        \right.
        \label{atdef}
    \end{equation}
    The Laplacian tensor can be defined as $\mathcal{L}=\mathcal{D}-\mathcal{A}$, where $\mathcal{D}$ is an $(m+1)$th order $n$-dimensional diagonal tensor of which diagonal elements are the degrees of vertices. In different structures, the coefficients $\frac{1}{m!}$ in \eqref{atdef} may differ. The definition of the degree may also differ.
    Hu and Qi defined the algebraic connectivity for the hypergraph using the $H$-eigenvalue and $Z$-eigenvalue \cite{2012Algebraic} of its Laplacian tensor. There are also studies about spectrum \cite{2014Regular,Xie2016Spectral} under the same structure.
    
    The Laplacian eigenmap \cite{2003Laplacian,2001Laplacian} is widely used in dimensionality reduction in data science. Given a dataset $(x_1,\dots,x_n)\in\mathbb{R}^{p\times n}$, a dimensionality reduction algorithm aims to project the dataset onto a space with lower dimension $q<p$, while keeping the distance between data to some extent. For each $x_i$, the Laplacian eigenmap algorithm connects a vertex $v_i$ with its neighbors, for example, $k$ nearest neighbors or neighbors of which distance is less than a given threshold $\epsilon$. Then a weighed graph is generated. The Laplacian eigenmap projects the dataset onto the subspace spanned by the eigenvectors corresponding to the $q$ smallest non-zero eigenvalues of the normalized Laplacian matrix. There are also some matrix-based hypergraph Laplacian eigenmap surveys.

\subsection{Graph Reduction Techniques for Hypergraph}
    Graph reduction techniques \cite{HypergraphCuts,lawler1973cutsets,agarwal2006higher} reduce a hypergraph into a related graph, and therefore classic graph methods that allow multiple edges can be applied. Clique reduction \cite{HypergraphCuts,lawler1973cutsets} splits a hyperedge $E=\{v_{i_1},\dots,v_{i_{m+1}}\}$ into a clique $r(E)=\{\{v_{i_j},v_{i_k}\}:\ 1\leq j<k\leq m+1\}$. For a directed hypergraph, the reduction of $(T=\{v_{i_1},\dots,v_{i_{m}}\},v_{i_{m+1}})$ is $r(E)=\{(v_{i_j},v_{i_{m+1}}):\ j\in[m]\}$. 
    For a hypergraph $\mathcal{G}=(V,\mathcal{E})$, its reduced graph $r(\mathcal{G})=(V,r(\mathcal{E}))$ satisfies
    \begin{equation*}
        r(\mathcal{E})=\bigsqcup_{E\in\mathcal{E}} r(E),
    \end{equation*}
    where $\bigsqcup$ is the disjoint union as we allows multiple edges in $r(\mathcal{G})$.
    
    Graph reduction can help solving many problems. For example, a hypergraph cut $V=S\cup S^{\rm c}$ with penalty function 
    \begin{equation}
        f(S)=\sum_{E\in\mathcal{E}} \vert E\cap S\vert\vert E\cap S^{\rm c}\vert g(S,S^{\rm c})
        \label{timespenalty}
    \end{equation}
    can be naturally transformed into the graph cut problem of $r(\mathcal{G})$ by the clique reduction. $g(S,S^{\rm c})$ is a normalized function, such as $\frac{1}{\vert S\vert}+\frac{1}{\vert S^{\rm c}\vert}$ and $\frac{1}{{\rm vol}(S)}+\frac{1}{{\rm vol}( S^{\rm c})}$ where ${\rm vol}(S)=\sum_{v\in S} d_v$. We can also consider the isoperimetric number of a hypergraph $\mathcal{G}$  \cite{li2017analytic}
    \begin{equation*}
        i(\mathcal{G})=\min_{S\subset V}\left\{\frac{\vert \{E\in\mathcal{E}:E\cap S\neq \emptyset,\ E\cap S^{\rm c} \neq\emptyset\}\vert}{\min\{\vert S\vert,\vert S^{\rm c}\vert\}}\right\},
    \end{equation*}
    which corresponds to the all-or-nothing cut function in \cite{veldt2022hypergraph}.
    It can be modeled by the circle reduction, which reduce a hyperedge 
    $\{v_{i_1},\dots,v_{i_{m+1}}\}$ into 
    \begin{equation*}
        \{\{v_{i_j},\ v_{i_{j+1}}\}:\ j=1,\dots,m+1\},
    \end{equation*}
    where $v_{i_{m+2}}=v_{i_1}$. After the reduction, we have
    \begin{equation*}
        i(r(\mathcal{G}))=2i(\mathcal{G}).
    \end{equation*}

\section{A Compatible Tensor Product Structure}
In this section, we propose a tensor product structure that is compatible with the hypergraph structure.
\subsection{A New Tensor Product in $T_{m+1,n}$}
    To fit the structure of the hypergraph better, we propose a new product between two tensors $\mathcal{A}\in T_{m+1,n}$ and $\mathcal{B}\in T_{k+1,n}$. 
    \begin{defi}
        The product between two tensors $\mathcal{A}\in T_{m+1,n}$ and $\mathcal{B}\in T_{k+1,n}$ is a new tensor $\mathcal{C}\in T_{m+1,n}$, satisfying
        \begin{equation*}
            c_{i_1,\dots,i_m,i_{m+1}}=\sum_{j_1,j_2,\dots,j_k,t=1}^n a_{i_1,\dots,i_m,t}b_{j_1,\dots,j_k,i_{m+1}}\delta(t\in\{j_1,\dots,j_k\}),
        \end{equation*}
        in which $\delta(p)$ is like the Kronecker function. If the statement $p$ is true, then $\delta(p)=1$. Otherwise $\delta(p)=0$.
        \label{proddefi}
    \end{defi}
    This product shows great compatibility with the matrix product. When $m=k=1$, the product is just the same as the standard matrix product. When $m=0$, it leads to a vector-tensor product $x^\top \mathcal{B}$ and if $k=1$ this time, it is the same as the standard vector-matrix product. 

    The associative law holds for this product. For $\mathcal{A},\mathcal{B}$ and $\mathcal{C}$ we can assert $(\mathcal{A}*\mathcal{B})*\mathcal{C}=\mathcal{A}*(\mathcal{B}*\mathcal{C})$. This product is not communicative.
    
    We then focus the product between tensors in $T_{m+1,n}$, i.e., $k=m$. We define the diagonal tensor $\mathcal{D}={\rm diag}\left((a_i)_{i=1}^n\right)$ as $d_{i,\dots,i}=a_i$ for $i\in[n]$ and $d_{i_1,\dots,i_{m+1}}=0$ otherwise.
    The right identity $\mathcal{I}\in T_{m+1,n}$ is $\mathcal{I}={\rm diag}\left((1)_{i=1}^n\right)$, satisfying
    \begin{equation*}
        \mathcal{A}*\mathcal{I}=\mathcal{A},\ \forall\mathcal{A}\in T_{m+1,n}.
    \end{equation*}
    
    We move on to the vector-tensor product, denoting it as $x^\top\mathcal{A}$ for $x\in\mathbb{R}^n$ and $\mathcal{A}\in T_{m+1,n}$. By the Definition \ref{proddefi} $x^\top\mathcal{A}\in\mathbb{R}^{1\times n}$ and 
    \begin{equation*}
        (x^\top\mathcal{A})_j=\sum_{i_1,\dots,i_m,i} x_i a_{i_1,\dots,i_m,j}\delta(i\in\{i_1,\dots,i_m\})
        =\sum_{i_1,\dots,i_m}  a_{i_1,\dots,i_m,j}(x_{i_1}+\cdots+x_{i_m}).
    \end{equation*}
    Based on this we can define the eigenvalue and eigenvector under this product. The eigenvalue and eigenvector $(\lambda,x)$ satisfies
    \begin{equation*}
        x^\top \mathcal{A}=\lambda x^\top,
    \end{equation*}
    and can be solved by a linear equation
    \begin{equation*}
        x^\top (\mathcal{A}-\lambda\mathcal{I})={\bf 0}.
    \end{equation*}
    The quadratic form $(x^\top \mathcal{A}) x\in\mathbb{R}$ is
    \begin{align*}
        (x^\top \mathcal{A}) x
        &=\sum_{i_1,\dots,i_m,i_{m+1},t} x_t a_{i_1,\dots,i_m,i_{m+1}}\delta(t\in\{i_1,\dots,i_m\})x_{i_{m+1}}\\
        &=\sum_{i_1,\dots,i_m,i_{m+1}} a_{i_1,\dots,i_m,i_{m+1}}x_{i_{m+1}}(x_{i_1}+\cdots+x_{i_m}).
    \end{align*}
    
    For $x^\top\in\mathbb{R}^{1\times n}$, $x^\top\mathcal{A}$ is a linear transform on the row vector space $\mathbb{R}^{1\times n}$. Therefore it can be represented by a matrix $A\in\mathbb{R}^{n\times n}$, denoting as $A=\phi(\mathcal{A})$. $A$ can be identified by directly computing $e_k^\top \mathcal{A}$, which leads to the $k$-th column of $A$.
    \begin{equation}
        \begin{aligned}
            (e_k^\top \mathcal{A})_i&=(0,\dots,0,1,0,\dots,0)\mathcal{A}
            =\sum_{j_1,\dots,j_m,t}e_{kt}a_{j_1,\dots,j_m,i}\delta(t\in\{j_1,\dots,j_m\})\\
            &=\sum_{j_1,\dots,j_m}a_{j_1,\dots,j_m,i}\delta(k\in\{j_1,\dots,j_m\}).
        \end{aligned}
        \label{eqcal}
    \end{equation}
    As a result, the eigenvalue problem and the extremum of the quadratic form can be solved.
    
    
    Subtensor under this product is not a cube as usual. Consider a linear equation $x^\top \mathcal{A}=b$. If we focus on the reaction of $\{x_i\}_{i\in I}$ and $\{b_j\}_{j\in J}$ where $I$ and $J$ are index sets, this will lead to an irregular subset of $\mathcal{A}$ 
    \begin{equation*}
        \mathcal{A}[I,J]=\{l_{i_1,\dots,i_m,j}\}|\{i_1,\dots,i_m\}\cap I\neq \emptyset,j\in J\}.
    \end{equation*}
    The influence of $J$ is the same as the common case of submatrix. However, the shape influenced by $I$ is irregular. When $\mathcal{A}$ is $3$rd-order, the location of elements in the slice is like a union of several crosses. We will still use the expression $x_I^\top L[I,J]$.

\subsection{Adjacency Tensor, Laplacian Tensor and its Properties}
    We retain the definition of the adjacency tensor in \eqref{atdef}. For the directed hypergraph, the condition $\{i_1,\dots,i_{m+1}\}\in\mathcal{E}$ will be $(\{i_1,\dots,i_{m}\},i_{m+1})\in\mathcal{E}$.  
    We can easily identify that the adjacency tensor is permutation invariant. For an undirected hypergraph and a permutation 
    $$\sigma:[m+1]\to[m+1],$$
    we have $a_{i_{\sigma(1)},\dots,i_{\sigma(m+1)}}=a_{i_1,\dots,i_{m+1}}$ because they correspond to the same hyperedge. For a directed hypergraph and a permutation $\sigma:\ [m]\to[m]$, we have $a_{i_{\sigma(1)},\dots,i_{\sigma(m)},i_{m+1}}=a_{i_1,\dots,i_{m+1}}$. 

    Our product is compatible with the structure of the hypergraph. Suppose $\mathcal{A}_i$ is the adjacency tensor of $\mathcal{G}_i$ for $i=1,2$. Then $\mathcal{A}=\mathcal{A}_1*\mathcal{A}_2$ also corresponds to a hypergraph. Its hyperedge corresponds to a hyperpath of length $2$, the first hyperedge comes from $\mathcal{G}_1$ and the second comes from $\mathcal{G}_2$. Moreover, if $i_1\neq i_{m+1}$ holds for all $a_{i_1,\dots,i_{m+1}}\neq 0$, then $\mathcal{A}$ is exactly an adjacency tensor and $a_{i_1,\dots,i_{m+1}}=\frac{k}{m!}$ means there are exactly $k$ such hyperpaths. $i_1= i_{m+1}$ corresponds to the self-loop situation in graph theory.

    The degree of vertices $(d_i)_{i=1}^n$ can be defined as 
    \begin{equation}
        (d_i)_{i=1}^n={\bf 1}^\top\mathcal{A}=\left(m \sum_{i_1,\dots,i_m}^n a_{i_1,\dots,i_m,i}\right)_{i=1}^n.
        \label{defideg}
    \end{equation}
    As the definition of the Laplacian tensor is $\mathcal{L}={\rm diag}(\{d_i\}_{i=1}^n)-\mathcal{A}$, we have ${\bf 1}^\top \mathcal{L}={\bf 0}$.
    For a directed hypergraph, $(d_i)_{i=1}^n$ is the indegree of vertices. 
    
    Although in the undirected hypergraph the definition of the degree of a vertex \eqref{defideg} leads to 
    \begin{equation*}
        {\rm deg}(v_i)=m\vert\{E\in\mathcal{E}:v_i\in E\}\vert,
    \end{equation*}
    we retain the coefficient $m$ so that in a directed hypergraph the total outdegree can match the total indegree. In a directed hypergraph, the definition \eqref{defideg} leads to the indegree of a vertex: 
    \begin{equation*}
        {\rm deg}_{\rm in}(v_i)=m\vert\{E=(T,h)\in\mathcal{E}:v_i=h\}\vert.
    \end{equation*}
    As the tail of a hyperedge contains $m$ vertices and the head contains only one, we have 
    \begin{equation*}
    \begin{aligned}
        \sum_i {\rm deg}_{\rm in}(v_i)&=\sum_i m\vert\{E=(T,h)\in\mathcal{E}:v_i=h\}\vert\\
        &=\sum_i\vert\{E=(T,h)\in\mathcal{E}:v_i\in T\}\vert=\sum_i {\rm deg}_{\rm out}(v_i),
    \end{aligned}
    \end{equation*}
    where the outdegree of vertex $v_j$ can be counted by 
    \begin{equation*}
        {\rm deg}_{\rm out}(v_i)=\vert\{E=(T,h)\in\mathcal{E}:v_i\in T\}\vert=\sum_{i_1,\dots,i_m,i_{m+1}} a_{i_1,\dots,i_m,i_{m+1}}\delta(i\in\{i_1,\dots,i_m\}).
    \end{equation*}

    For the undirected graph, its adjacency matrix and Laplacian matrix is symmetric. There is also a similar result for the undirected hypergraph.
    
    \begin{lm}
        The linear representation $\phi(\mathcal{A})$ of the adjacency tensor $\mathcal{A}$ is symmetric if the hypergraph $\mathcal{G}$ is undirected. This also holds for the Laplacian tensor.
        \label{lmSymMax}
    \end{lm}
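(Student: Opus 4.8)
The plan is to compute the entries of the matrix $\phi(\mathcal{A})$ directly from formula \eqref{eqcal} and to exploit the full permutation symmetry of the undirected adjacency tensor. First I would recall that for an undirected hypergraph the entry $a_{i_1,\dots,i_{m+1}}$ is invariant under \emph{every} permutation $\sigma:[m+1]\to[m+1]$, and equals $\tfrac{1}{m!}$ exactly when $\{i_1,\dots,i_{m+1}\}$ is a hyperedge (so in particular the $m+1$ indices are pairwise distinct) and $0$ otherwise. Writing $A=\phi(\mathcal{A})$, formula \eqref{eqcal} gives the $(i,k)$ entry
\begin{equation*}
    A_{ik}=(e_k^\top\mathcal{A})_i=\sum_{j_1,\dots,j_m}a_{j_1,\dots,j_m,i}\,\delta(k\in\{j_1,\dots,j_m\}).
\end{equation*}

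Next I would evaluate this sum by a counting argument. For $i\neq k$, the nonzero terms arise from the orderings $(j_1,\dots,j_m)$ of the $m$ vertices of $E\setminus\{i\}$, one family for each hyperedge $E$ with $i\in E$; every such term carries weight $\tfrac{1}{m!}$ and there are $m!$ orderings, so a fixed $E$ contributes $1$ whenever the constraint $k\in\{j_1,\dots,j_m\}$ holds, i.e.\ whenever $k\in E\setminus\{i\}$. Hence
\begin{equation*}
    A_{ik}=\bigl|\{E\in\mathcal{E}:\ i\in E,\ k\in E\}\bigr|,\qquad i\neq k,
\end{equation*}
which is manifestly symmetric in $i$ and $k$. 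For the diagonal, the distinctness of the vertices of a hyperedge forces $i\notin\{j_1,\dots,j_m\}$ on every nonzero term, so $A_{ii}=0$; thus $A=\phi(\mathcal{A})$ is symmetric. As a consistency check, the row sum $\sum_k A_{ik}=m\,|\{E:\ i\in E\}|$ recovers the degree $d_i$ of \eqref{defideg}.

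Finally I would treat the Laplacian by linearity. Since $x^\top\mathcal{A}$ is linear in $\mathcal{A}$, the map $\phi$ is linear, so $\phi(\mathcal{L})=\phi(\mathcal{D})-\phi(\mathcal{A})$. Applying \eqref{eqcal} to the diagonal tensor $\mathcal{D}=\mathrm{diag}\left((d_i)_{i=1}^n\right)$, the only nonzero entries $d_{j_1,\dots,j_m,i}$ occur when $j_1=\cdots=j_m=i$, for which $\delta(k\in\{i\})=\delta(k=i)$; hence $\phi(\mathcal{D})=\mathrm{diag}(d_1,\dots,d_n)$ is diagonal and therefore symmetric. Combining this with the symmetry of $\phi(\mathcal{A})$ already established shows $\phi(\mathcal{L})$ is symmetric.

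The main obstacle is the counting step: one must check carefully that the Kronecker-type constraint $\delta(k\in\{j_1,\dots,j_m\})$ interacts correctly with the $m!$ orderings of a fixed hyperedge, and that it is precisely the full permutation symmetry of the undirected adjacency tensor that makes the resulting count symmetric in $i$ and $k$. This symmetry is exactly what fails in the directed case, where only the first $m$ indices may be permuted, so the argument correctly isolates why the undirected hypothesis is needed.
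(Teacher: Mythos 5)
Your proposal is correct, and it reaches the conclusion by a slightly different mechanism than the paper. The paper stays purely algebraic: starting from the same formula \eqref{eqcal}, it uses permutation invariance once to rewrite $\phi(\mathcal{A})_{ki}=m\sum_{j_2,\dots,j_m}a_{k,j_2,\dots,j_m,i}$ (absorbing the constraint $\delta(k\in\{j_1,\dots,j_m\})$ into a factor $m$ with $k$ placed in the first slot), and a second time to swap $i$ and $k$ inside the sum, concluding $\phi(\mathcal{A})_{ki}=\phi(\mathcal{A})_{ik}$ without ever evaluating the entries. You instead evaluate the sum exactly by counting orderings of $E\setminus\{i\}$, obtaining the closed form $\phi(\mathcal{A})_{ik}=\bigl|\{E\in\mathcal{E}:\, i\in E,\ k\in E\}\bigr|$ for $i\neq k$ and $\phi(\mathcal{A})_{ii}=0$, and read off symmetry from this manifestly symmetric codegree count. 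Both arguments hinge on the same underlying fact (full permutation invariance of the undirected adjacency tensor, which fails in the directed case, as you correctly isolate), so they are close in spirit; but your version buys strictly more: it identifies $\phi(\mathcal{A})$ as the adjacency matrix of the clique-reduction multigraph $r(\mathcal{G})$, a fact the paper asserts later in Section 3.3 without a detailed derivation, and it gives the row-sum/degree consistency with \eqref{defideg} for free. You also treat the Laplacian claim explicitly, via linearity of $\phi$ and the computation that $\phi(\mathcal{D})$ is the diagonal matrix ${\rm diag}(d_1,\dots,d_n)$, whereas the paper leaves that half of the statement implicit. One trivial point of bookkeeping: you write $A_{ik}=(e_k^\top\mathcal{A})_i$ where the paper's convention is $\phi(\mathcal{A})_{ki}=(e_k^\top\mathcal{A})_i$; since the whole point is that the matrix is symmetric, this transposed indexing is immaterial, but it is worth stating the convention once to avoid confusion.
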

    
    \begin{proof}
        For an $(m+1)$-uniform undirected hypergraph, a hyperedge $E=\{v_{i_1},\dots,v_{i_{m+1}}\}$ will lead to $(m+1)!$ non-zero elements in $\mathcal{A}$: $a_{\sigma({i_1}),\dots,\sigma({i_{m+1}})}=\frac{1}{m!}$ where $\sigma$ is an arbitrary permutation over $\{1,\dots,m+1\}$. Then by \eqref{eqcal} we have
        \begin{equation*}
            \begin{aligned}
                \phi(\mathcal{A})_{ki}&=\sum_{j_1,\dots,j_m}a_{j_1,\dots,j_m,i}\delta(k\in\{j_1,\dots,j_m\})\\
                &=m\sum_{j_2,\dots,j_m}a_{k,j_2,\dots,j_m,i}.\ \ \ {\rm Use\ the\ permutation.}
            \end{aligned}
        \end{equation*}
        
        Then use the permutation invariant property over all the hyperedges that contains vertices $v_k$ and $v_i$, we have
        \begin{equation*}
            \phi(\mathcal{A})_{ki}=m\sum_{j_2,\dots,j_m}a_{k,j_2,\dots,j_m,i}
            =m\sum_{j_2,\dots,j_m}a_{i,j_2,\dots,j_m,k}=\phi(\mathcal{A})_{ik}.
        \end{equation*}
    \end{proof}

    In the following parts, we will use $\widehat{i}$ as an abbreviation for the ordered indices $(i_1,\dots,i_{m+1})$ or $(i_1,\dots,i_m)$ if the dimension of $\widehat{i}$ is not confusing.

\subsection{Comparison between Different Structure}    
    There are different methods to analyze the properties of the hypergraph. The tensor analysis based on the Einstein product is not compatible with the hypergraph structure. When taking the Einstein product, we need the hypergraph to be even order uniform. Suppose we consider a $2m$-uniform hypergraph with $n$ vertices in the view of Einstein product. Two adjacent hyperedge in a hyperpath need to have exactly $m$ same vertices. If we consider the consensus problem \eqref{gcp}, then $\mathcal{X}(t)\in T_{m,n}$ and $\mathcal{X}_{\widehat{i}}(t)$ does not correspond to a vertex $v_i$, but to an ordered set of $m$ vertices $(v_i)_{i\in\widehat{i}}$. Meanwhile, the degree is defined for the $m$-vertices set rather than the vertex itself. This is not common in graph theory and complex network. Moreover, if we consider the splitting or clustering tasks, splitting the $m$-vertices set is meaningless and we only concern the clustering of these vertices themselves. The H-eigenvalue and the Z-eigenvalue structure partially resolve these problems. For \eqref{gcp} we have $x(t)\in\mathbb{R}^n$ and each $x_i(t)$ corresponds to a vertex $v_i$. However, there are still some problems. The positive semi-definite property still asks the order to be even. Also, both the H-eigenvalue and the Z-eigenvalue are high order polynomials. Computing them is much more difficult. Our tensor product structure fixes these problems. We can summarize its advantages as below.  Compared with the Einstein product, our product has the following advantages.
    \begin{enumerate}
        \item The Einstein product is only for the even order uniform hypergraph. Our product has no such limit.
        \item In a hyperpath, two adjacent hyperedges only need to have at least one common vertex under our product structure, rather than exactly $m$ common vertices for an $2m$-uniform hyperedge under the Einstein product.
        \item In the models like \eqref{gcp}, each vertex is equipped with a state, rather than each $m$-vertices set. This is more natural and explicable in network theory. The clustering and partition can be done on the vertices, rather than on the $m$-vertices set.
        \item Much less computational cost.
    \end{enumerate}
    Compared with other tensor products, our product has the following advantages.
    \begin{enumerate}
        \item The positive semi-definite property of the Laplacian tensor needs the hypergraph to be even order uniform. Our product does not have such limit.
        \item Much less computational cost.
    \end{enumerate}
    
    In some cases, our structure is consistent with the clique reduction. For $\mathcal{G}$ and its adjacency tensor $\mathcal{A}$, the adjacency matrix of its reduced graph $r(\mathcal{G})$ in the clique reduction is exactly $\phi(\mathcal{A})$, the matrix representation of the linear mapping of $\mathcal{A}$. This also happens on the Laplacian tensor and will be shown in the following section. This consistency makes our structure the same as a matrix-based analysis of the reduced graph in some cases. For example, consider the min-cut problem \cite{HypergraphCuts} with penalty function \eqref{timespenalty}.
    As 
    \begin{equation*}
        f(S)={\bf 1}^\top_S \mathcal{L}{\bf 1}_S={\bf 1}^\top_S L_{r(\mathcal{G})}{\bf 1}_S
    \end{equation*}
    holds for the indicator vector ${\bf 1}_S$, a tensor-based analysis in our structure is the same as the matrix-based analysis for the reduced graph $r(\mathcal{G})$ if the matrix analysis method allows multiple edges. For example, the consensus problem of the hypergraph under this product has form
    \begin{equation*}
        \dot{x}(t)=-x(t)^\top\mathcal{L},
    \end{equation*}
    which also corresponds to the continuous-time homogeneous polynomial dynamical system \cite[Eq. (6)]{Chen2023Explicit}.
    It is the same as a graph consensus problem \eqref{gcp} after the clique reduction.
    Some results related to the adjacency matrix also have this consistency, such as the Hoffman theorem \cite[Thm 3.23]{Bapat2014graphs}.
    
    However, our tensor product structure and the tensor-based analysis still have some unique advantages. Some graph methods that do not support multiple edges can not be applied to the reduced graph. For example, if we allow multiple edges, the relationship between the algebraic connectivity and the vertex connectivity in \cite{1993Directed,wu2005algebraic} no longer holds, and the new relationship will be established in the following section using our tensor-based analysis. The hypergraph and the adjacency tensor also carry more information than the reduced graph and its adjacency matrix. Moreover, the hypergraph is a unique structure. Suppose a hyperedge $\{v_1,\dots,v_{m+1}\}$ is split into a clique $\{\{v_i,v_j\}:1\leq i<j\leq m+1\}$ and now we remove the vertex $v_1$. In the original hypergraph, the whole hyperedge is removed. In its adjacency tensor, we remove the elements of which indices contain $1$, leading to an $(m+1)$th order $(n-1)$-dimensional tensor. In the reduced graph, only $\{\{v_1,v_i\}:1< i\leq m+1\}$ is removed and $\{\{v_i,v_j\}:2\leq i<j\leq m+1\}$ are retained. In the adjacency matrix, it is an $(n-1)\times(n-1)$ matrix containing $\{\{v_i,v_j\}:2\leq i<j\leq m+1\}$. According to this analysis, we have such conclusion.
    \begin{prop}
        For a hypergraph $\mathcal{G}$ and its reduced graph $r(\mathcal{G})$, their vertex connectivity satisfies
        \begin{equation*}
            v(\mathcal{G})\leq v(r(\mathcal{G})).
        \end{equation*}
    \end{prop}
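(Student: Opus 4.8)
The plan is to show that every vertex cutset of the reduced graph $r(\mathcal{G})$ is also a cutset of $\mathcal{G}$. Once this is established, the family of cutsets of $r(\mathcal{G})$ is a subfamily of the cutsets of $\mathcal{G}$, so the minimum over the (larger) family defining $v(\mathcal{G})$ can only be smaller, giving $v(\mathcal{G}) \le v(r(\mathcal{G}))$ directly from the definitions in Section~2.1. Concretely, I would fix a minimum cutset $\widehat{V}$ of $r(\mathcal{G})$ with $|\widehat{V}| = v(r(\mathcal{G}))$ and prove that $\mathcal{G}(V \setminus \widehat{V})$ is disconnected. It is cleanest to argue the contrapositive: assuming $\mathcal{G}(V \setminus \widehat{V})$ is connected, I would deduce that $r(\mathcal{G})(V \setminus \widehat{V})$ is connected, contradicting that $\widehat{V}$ cuts $r(\mathcal{G})$.

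The engine of the argument is a comparison of two graphs on the common vertex set $V \setminus \widehat{V}$: the clique reduction of the induced hypergraph, $r(\mathcal{G}(V \setminus \widehat{V}))$, and the induced subgraph of the reduction, $r(\mathcal{G})(V \setminus \widehat{V})$. The key structural fact, which is the precise formalization of the asymmetry noted just before the proposition, is the edge inclusion
\[
r\big(\mathcal{G}(V \setminus \widehat{V})\big) \subseteq r(\mathcal{G})(V \setminus \widehat{V}).
\]
Indeed, by the definition of the induced hypergraph every surviving hyperedge $E$ satisfies $E \cap \widehat{V} = \emptyset$, so each clique edge in $r(E)$ has both endpoints outside $\widehat{V}$ and therefore survives the vertex deletion in $r(\mathcal{G})$; the right-hand side may in addition contain clique edges coming from hyperedges that $\widehat{V}$ meets only partially, which are destroyed in $\mathcal{G}$ but not in its reduction. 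This inclusion is exactly why deleting a vertex is more destructive in the hypergraph than in its reduction, and is what forces the inequality to point in this direction.

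Next I would establish that the clique reduction of a connected hypergraph is a connected graph. Given a hyperpath $<E_1, v_1, \dots, v_{l-1}, E_l>$ witnessing the connection of $v_0$ and $v_l$ inside $\mathcal{G}(V\setminus\widehat{V})$, each hyperedge $E_i$ becomes a clique under reduction, so the consecutive landmarks $v_0, v_1, \dots, v_{l-1}, v_l$ are joined by clique edges lying in $E_1, E_2, \dots, E_l$ respectively; using the conventions $v_0 \ne v_1$, $v_{l-1} \ne v_l$ and $v_i \ne v_{i+1}$ recalled in the Preliminaries, all consecutive landmarks are distinct and these clique edges are genuine. Since the hyperpath lives entirely inside $V\setminus\widehat{V}$, so does the resulting walk, and connectivity of $\mathcal{G}(V\setminus\widehat{V})$ transfers to $r(\mathcal{G}(V\setminus\widehat{V}))$. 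Combining this with the inclusion above, a connected $r(\mathcal{G}(V\setminus\widehat{V}))$ sits inside $r(\mathcal{G})(V\setminus\widehat{V})$ on the same vertex set, and adjoining edges preserves connectivity, so $r(\mathcal{G})(V\setminus\widehat{V})$ is connected. This closes the contrapositive and hence the proof.

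I expect the main obstacle to be the faithful translation between the hyperpath notion of hypergraph connectivity and ordinary walk-connectivity of the reduced graph: one must verify that the landmark vertices genuinely lie in $V\setminus\widehat{V}$ and that the endpoint and repetition conventions on a hyperpath do not break the constructed walk, while also keeping the inclusion pointing in the correct direction. The remaining ingredients, namely the edge inclusion and the monotonicity of connectivity under adding edges, are routine once the connectivity definitions are pinned down, and degenerate cases such as $|V\setminus\widehat{V}| \le 1$ are handled by the usual convention for vertex connectivity. The directed case follows the same scheme with weak connectivity in place of ordinary connectivity.
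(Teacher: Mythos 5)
Your proof is correct and takes essentially the same approach as the paper: the paper's justification is exactly the informal analysis stated just before the proposition (deleting a vertex destroys whole hyperedges of $\mathcal{G}$ but only incident edges of $r(\mathcal{G})$), which is precisely your edge inclusion $r\big(\mathcal{G}(V\setminus\widehat{V})\big)\subseteq r(\mathcal{G})(V\setminus\widehat{V})$. You have simply made that observation rigorous by verifying that hyperpath connectivity transfers to the clique reduction, so every cutset of $r(\mathcal{G})$ is also a cutset of $\mathcal{G}$.
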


    
\section{Algebraic Connectivity of Hypergraph}
    In this section, we define the algebraic connectivity of the hypergraph via our tensor product structure and prove some properties, making it possible to serve as a measure of robustness. We also generalize two classic problem in complex network to the hypergraph.
\subsection{Algebraic Connectivity for the Undirected Hypergraph}
    In this and the next subsection we focus on the undirected hypergraph. 
    Based on Lemma \ref{lmSymMax} we can defined the algebraic connectivity of the hypergraph by the quadratic form.
    \begin{defi}
        Let ${\bf 1}$ be the all ones vector. The algebraic connectivity of an $(m+1)$-uniform hypergraph $\mathcal{G}$ is defined by quadratic form
        \begin{equation*}
            a(\mathcal{G})=\min_{x\perp {\bf 1},\Vert x\Vert_2=1} x^\top \mathcal{L} x.
        \end{equation*}
        \label{defiACU}
    \end{defi}
    
    We have a direct conclusion.
    \begin{thm}
        Algebraic connectivity $a(\mathcal{G})\geq0$. $a(\mathcal{G})=0$ if and only if the hypergraph is not connected.
        \label{agpsd}
    \end{thm}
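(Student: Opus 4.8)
The plan is to reduce the whole statement to the quadratic form of the symmetric matrix $L:=\phi(\mathcal{L})$ and to exhibit that form as a manifestly nonnegative sum of squares indexed by the hyperedges. First I would invoke Lemma \ref{lmSymMax} to replace the tensor expression by an honest matrix form: since $\phi(\mathcal{L})$ is symmetric, $(x^\top\mathcal{L})x=x^\top L x$. Using the computation of $\phi(\mathcal{A})_{ki}$ from the proof of Lemma \ref{lmSymMax} together with the permutation invariance of the adjacency tensor, a short count shows that for $k\neq i$ the off-diagonal entry $\phi(\mathcal{A})_{ki}$ equals the number of hyperedges containing both $v_k$ and $v_i$, while $\phi({\rm diag}(d_i))={\rm diag}(d_i)$; since $d_i=m\lvert\{E\ni v_i\}\rvert$ by \eqref{defideg} and each $E\ni v_i$ contributes $m$ off-diagonal neighbours, the row sums of $L$ vanish (equivalently ${\bf 1}^\top\mathcal{L}={\bf 0}$). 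Hence $L$ is exactly the graph Laplacian of the clique-reduced graph $r(\mathcal{G})$, and I can write
\begin{equation*}
x^\top\mathcal{L}x=\sum_{E\in\mathcal{E}}\ \sum_{\{i,j\}\subseteq E}(x_i-x_j)^2,
\end{equation*}
the standard Laplacian sum of squares, the inner sum running over unordered pairs of vertices inside each hyperedge.

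From this identity nonnegativity is immediate: every summand is a square, so $x^\top\mathcal{L}x\ge 0$ for all $x$, and therefore $a(\mathcal{G})=\min_{x\perp{\bf 1},\,\|x\|_2=1}x^\top\mathcal{L}x\ge 0$, giving the first assertion of Definition \ref{defiACU}'s object.

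For the characterization of the zero case I would argue through the minimizer directly rather than through eigenvalue multiplicities. The feasible set $\{x:\|x\|_2=1,\ x\perp{\bf 1}\}$ is compact and the form is continuous, so the minimum is attained, and $a(\mathcal{G})=0$ holds if and only if some feasible $x$ makes every summand vanish, i.e.\ $x_i=x_j$ whenever $v_i,v_j$ lie in a common hyperedge. I would then link this to hypergraph connectivity: any two vertices of a single hyperedge are joined by a length-one hyperpath, so ``constant on every hyperedge'' forces $x$ to be constant on each connected component of $\mathcal{G}$. If $\mathcal{G}$ is connected there is a single component, so such an $x$ is a multiple of ${\bf 1}$, which is incompatible with $x\perp{\bf 1}$ and $\|x\|_2=1$; hence no minimizer gives value $0$ and $a(\mathcal{G})>0$. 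Conversely, if $\mathcal{G}$ is disconnected, choose a nonempty proper component $V_1$ with complement $V_2$ (no hyperedge crosses them, by the same length-one argument), set $x$ equal to the constant $\lvert V_2\rvert$ on $V_1$ and $-\lvert V_1\rvert$ on $V_2$; then $x\perp{\bf 1}$, every hyperedge sits entirely in one part so the sum of squares is $0$, and after normalization $x$ is a feasible minimizer, giving $a(\mathcal{G})=0$.

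The routine but essential obstacle is the first step: verifying that the tensor quadratic form collapses exactly to the clique-reduction sum of squares with the correct multiplicities. The membership indicator $\delta(t\in\{i_1,\dots,i_m\})$ in the product counts distinct vertices, so one must carefully use the permutation invariance of the adjacency tensor and the bookkeeping factor $m$ in \eqref{defideg} to confirm both that the off-diagonal weight is precisely the hyperedge-incidence count and that $L$ has zero row sums. Once that representation is secured, the sign and the connectivity dichotomy follow from the elementary compactness-and-sum-of-squares argument above.
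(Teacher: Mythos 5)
Your proposal is correct and takes essentially the same route as the paper: both arguments reduce the quadratic form to the clique-reduction sum of squares $\sum_{E\in\mathcal{E}}\sum_{i<j,\, v_i,v_j\in E}(x_i-x_j)^2$ (the paper via the per-hyperedge decomposition $\mathcal{L}=\sum_{E}\mathcal{L}_E$ computed directly at the tensor level, you via identifying $\phi(\mathcal{L})$ with the Laplacian of $r(\mathcal{G})$), and both settle the zero case by noting that a vanishing form forces $x$ to be constant on connected components. The one place you go beyond the paper is the disconnected direction of the equivalence: the paper only argues that connectedness forces $a(\mathcal{G})>0$, whereas you explicitly exhibit the feasible vector (value $|V_2|$ on $V_1$ and $-|V_1|$ on $V_2$, normalized) witnessing $a(\mathcal{G})=0$, which closes the ``if and only if'' completely.
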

    \begin{proof}
         This can be proved by analyzing the influence of a certain hyperedge. Suppose $\{\mathcal{G}_E\}_{E\in\mathcal{E}}$ is the set of the sub-hypergraph satisfying $\mathcal{G}_E=(V,\{E\})$. Let $\{\mathcal{L}_E\}_{E\in\mathcal{E}}$ be the corresponding Laplacian tensors, then $\mathcal{L}=\sum_{E\in\mathcal{E}} \mathcal{L}_E$ and thus $x^\top \mathcal{L}x=\sum_{E\in\mathcal{E}} x^\top\mathcal{L}_E x$. Based on this we can analyzing the influence of a certain hyperedge $x^\top\mathcal{L}_E x$. Without loss of generality we may assume $E=\{v_1,\dots,v_{m+1}\}$. Then
        \begin{equation*}
        \begin{aligned}
            x^\top \mathcal{L}_E x&=x^\top \mathcal{D}_E x-x^\top \mathcal{A}_E x=\sum_{j=1}^n ({\bf 1}^\top \mathcal{A}_E)_j x_j^2 -\sum_{j=1}^n (x^\top \mathcal{A}_{E})_j x_j\\
            &=\sum_{j=1}^n (x_j^2\sum_{i,i_1,\dots,i_m} a_{i_1,\dots,i_m,j}\delta(i\in\{i_1,\dots,i_m\}) )
            - \sum_{j=1}^n (x_i x_j\sum_{i,i_1,\dots,i_m} a_{i_1,\dots,i_m,j}\delta(i\in\{i_1,\dots,i_m\}) )\\
            &=m\sum_{j=1}^{m+1} x_j^2 - \sum_{i,j=1,i\neq j}^{m+1} x_i x_j=\sum_{1\leq i<j\leq m+1} (x_i-x_j)^2.
        \end{aligned}
        \end{equation*}
        It is positive semi-definite. In fact, it is the same as a quadratic form of the Laplacian matrix of a complete graph $K_{m+1}$. Therefore $x^\top \mathcal{L}_E x\geq 0$. Then 
        \begin{equation*}
            x^\top \mathcal{L}x=\sum_{E\in\mathcal{E}}\sum_{i,j\in E,i<j}(x_i-x_j)^2\geq 0.
            \label{sumXLX}
        \end{equation*}
        Suppose $\{v_1,\dots,v_k\}$ is a connected component of $\mathcal{G}$. If $x^\top \mathcal{L} x=0$ and $x_1=c$, then $x_j=c$ for all $j\in [k]$. As a result, all the vertices in a connected component share a same value.
        If $\mathcal{G}$ is connected, then $x^\top \mathcal{L} x=0$ implies $x=c{\bf 1}$. Therefore $a(\mathcal{G})>0$ holds for the connected hypergraph. 
        
    \end{proof}
    
    We actually prove $\mathcal{L}$ is positive semi-definite and the algebraic multiplicity of eigenvalue $0$ equals to the number of its connected components. In this proof, a hypergraph is equivalent to a graph generated by the clique reduction. However, the advantages of the hypergraph over the graph reduction will be seen in Lemma \ref{MainLM}.
    
    As the Laplacian tensor of an undirected hypergraph is positive semi-definite and ${\bf 1}^\top\mathcal{L}=0$, there is another representation of the algebraic connectivity of the undirected hypergraph.
    \begin{prop}
        The algebraic connectivity $a(\mathcal{G})$ is the second smallest eigenvalue of its Laplacian tensor.
    \end{prop}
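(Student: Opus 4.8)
The plan is to reduce the statement entirely to the matrix linear representation $L=\phi(\mathcal{L})$ and then invoke the classical Courant--Fischer characterization of the eigenvalues of a symmetric matrix. By the construction of the linear representation $\phi$, for every $x\in\mathbb{R}^n$ the row vector $x^\top\mathcal{L}$ coincides with $x^\top L$. Consequently the quadratic form in Definition~\ref{defiACU} is $(x^\top\mathcal{L})x=x^\top L x$, and the tensor eigenproblem $x^\top\mathcal{L}=\lambda x^\top$ is precisely $L^\top x=\lambda x$. So both the objective being minimized and the spectrum of $\mathcal{L}$ are governed by the single matrix $L$.

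First I would record that, by Lemma~\ref{lmSymMax}, $L$ is symmetric, so $L^\top=L$ and the eigenvalues of $\mathcal{L}$ under our product are exactly the ordinary eigenvalues of the real symmetric matrix $L$; in particular they are real and may be ordered as $\lambda_1\le\lambda_2\le\cdots\le\lambda_n$. Next I would pin down the bottom of the spectrum. Theorem~\ref{agpsd} shows $x^\top L x\ge0$ for all $x$, hence $\lambda_1\ge0$, while the identity ${\bf 1}^\top\mathcal{L}={\bf 0}$ together with the symmetry of $L$ yields $L{\bf 1}={\bf 0}$. Thus $\lambda_1=0$ and ${\bf 1}$ is an eigenvector associated with the smallest eigenvalue.

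With these facts in hand, the conclusion follows from the Rayleigh--Ritz (Courant--Fischer) theorem. Since the constraint $x\perp{\bf 1}$ is exactly orthogonality to the (normalized) eigenvector of the smallest eigenvalue, one obtains
\begin{equation*}
a(\mathcal{G})=\min_{x\perp{\bf 1},\ \|x\|_2=1}x^\top L x=\lambda_2,
\end{equation*}
the second smallest eigenvalue of $L$, equivalently of $\mathcal{L}$.

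The main obstacle, and really the only delicate point, is the bookkeeping that identifies the tensor eigenvalues with those of the symmetric matrix $L$ and verifies that the admissible set ${\bf 1}^\perp$ aligns with the orthogonal complement of the first Courant--Fischer eigenvector; once Lemma~\ref{lmSymMax} supplies the symmetry of $L$, this alignment is automatic. I would also note for completeness that the claim remains correct when $\mathcal{G}$ is disconnected: in that case $0$ is a multiple eigenvalue, so $\lambda_2=0$, which matches $a(\mathcal{G})=0$ as established in Theorem~\ref{agpsd}.
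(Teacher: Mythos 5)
Your proposal is correct and matches the paper's reasoning: the paper treats this proposition as an immediate consequence of the positive semi-definiteness of $\mathcal{L}$ (Theorem \ref{agpsd}), the identity ${\bf 1}^\top\mathcal{L}={\bf 0}$, and the symmetry of $\phi(\mathcal{L})$ from Lemma \ref{lmSymMax}, which is exactly the chain you spell out before invoking Courant--Fischer. Your write-up simply makes explicit the identification of the tensor eigenproblem with that of the symmetric matrix $L=\phi(\mathcal{L})$, a step the paper leaves implicit.
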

    The eigenvector corresponding to the algebraic connectivity is called the Fiedler vector. 

    We further move onto the relationship between the algebraic connectivity and the vertex connectivity.

    \begin{lm}
        If $\mathcal{G}_i=(V,\mathcal{E}_i)$ for $i=1,2$ and $\mathcal{E}_1\cap \mathcal{E}_2=\emptyset$, then $a(\mathcal{G}_1)+ a(\mathcal{G}_2)\leq a( \mathcal{G}_1\cup\mathcal{G}_2)$
    \end{lm}
    \begin{proof}
        \begin{equation*}
            a(\mathcal{G}_1\cup\mathcal{G}_2)=\min_{x\perp {\bf 1},\Vert x\Vert_2=1} x^\top(\mathcal{L}_1+\mathcal{L}_2)x\geq \min_{x\perp {\bf 1},\Vert x\Vert_2=1} x^\top\mathcal{L}_1x+\min_{x\perp {\bf 1},\Vert x\Vert_2=1} x^\top\mathcal{L}_2x
            =a(\mathcal{G}_1)+a(\mathcal{G}_2).
        \end{equation*}
    \end{proof}
    As the algebraic connectivity of an unconnected hypergraph is $0$, we have
    \begin{cor}
        If $\mathcal{G}_1\subseteq \mathcal{G}_2$, then $a(\mathcal{G}_1)\leq a(\mathcal{G}_2)$.
    \end{cor}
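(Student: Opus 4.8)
The plan is to reduce the Corollary to the preceding Lemma together with the nonnegativity established in Theorem \ref{agpsd}. The containment $\mathcal{G}_1 \subseteq \mathcal{G}_2$ means both hypergraphs share the vertex set $V$ while $\mathcal{E}_1 \subseteq \mathcal{E}_2$, so the additional structure in $\mathcal{G}_2$ is carried entirely by the hyperedges in $\mathcal{E}_2 \setminus \mathcal{E}_1$. The idea is to split $\mathcal{G}_2$ into $\mathcal{G}_1$ and this leftover part, and then exploit the fact that adding an edge-disjoint piece can only increase the algebraic connectivity.

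First I would set $\mathcal{G}_3 = (V, \mathcal{E}_2 \setminus \mathcal{E}_1)$. By construction $\mathcal{E}_1 \cap (\mathcal{E}_2 \setminus \mathcal{E}_1) = \emptyset$ and $\mathcal{G}_2 = \mathcal{G}_1 \cup \mathcal{G}_3$, so the edge-disjointness hypothesis of the preceding Lemma is satisfied. Applying that Lemma directly gives
\begin{equation*}
    a(\mathcal{G}_1) + a(\mathcal{G}_3) \leq a(\mathcal{G}_1 \cup \mathcal{G}_3) = a(\mathcal{G}_2).
\end{equation*}
Next, Theorem \ref{agpsd} guarantees $a(\mathcal{G}_3) \geq 0$; in fact $\mathcal{G}_3$ typically retains isolated vertices and is therefore disconnected, so $a(\mathcal{G}_3) = 0$, but nonnegativity alone suffices. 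Discarding the nonnegative term $a(\mathcal{G}_3)$ from the left-hand side yields $a(\mathcal{G}_1) \leq a(\mathcal{G}_2)$, which is the assertion.

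There is essentially no hard step here: all the content is inherited from the superadditivity Lemma and the positive semidefiniteness of the Laplacian tensor. The only point that deserves genuine care is the bookkeeping in the decomposition, namely confirming that the Laplacian tensors add over the edge-disjoint split, $\mathcal{L}_2 = \mathcal{L}_1 + \mathcal{L}_3$, exactly as in the summation $\mathcal{L} = \sum_{E \in \mathcal{E}} \mathcal{L}_E$ used in the proof of Theorem \ref{agpsd}. Once that additivity at the tensor level is in place, the identification $\mathcal{G}_2 = \mathcal{G}_1 \cup \mathcal{G}_3$ is legitimate and the chain of inequalities above closes the argument.
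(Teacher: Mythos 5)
Your proof is correct and follows exactly the paper's route: decompose $\mathcal{G}_2$ into $\mathcal{G}_1$ and the edge-disjoint remainder $(V,\mathcal{E}_2\setminus\mathcal{E}_1)$, apply the superadditivity lemma, and discard the remainder's algebraic connectivity, which is nonnegative by Theorem \ref{agpsd} (indeed zero, since the remainder is disconnected). No gaps; the additivity $\mathcal{L}_2=\mathcal{L}_1+\mathcal{L}_3$ you flag is immediate from the definition of the Laplacian tensor as a sum over hyperedges.
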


    Sparsity is an important hypothesis for the hypergraph. There are much more possible hyperedges in a hypergraph. A simple graph with $n$ vertices can only have $\frac{1}{2}n(n-1)$ edges at most. However, an $(m+1)$-uniform hypergraph can contain at most $\binom{n}{m+1}$ hyperedges. When $m\ll n$, this is almost an exponential rate of $m$. Specially, in a graph, a vertex can be incident to at most $(n-1)$ edges; In an $(m+1)$-uniform hypergraph, a vertex can be incident to $\binom{n-1}{m}$ hyperedges. This will boost up the eigenvalues of $\mathcal{A}$ and $\mathcal{L}$. For example, in a complete $5$-uniform hypergraph with $10$ vertices, each vertex is incident to $126$ hyperedges and the smallest non-zero eigenvalue of the Laplacian tensor is $560$. Therefore a sparsity hypothesis is needed. The sparsity $s$ of a hypergraph is defined as 
    \begin{equation*}
        s=\max_{v_i,v_j}\vert\{E\in\mathcal{E}:\ v_i,\ v_j\in E\}\vert.
    \end{equation*}
    We further prove that when removing a vertex and all its incident hyperedges, the upper bound for the loss of the algebraic connectivity is related to $s$.

    \begin{lm}
    \label{MainLM}
        Let $\mathcal{G}$ be a hypergraph with sparsity $s$, and $\mathcal{G}_1$ is derived by removing an arbitrary vertex and all its incident hyperedges, then 
        \begin{equation*}
            a(\mathcal{G}_1)\geq a(\mathcal{G})-(2m-1)s.
        \end{equation*}
    \end{lm}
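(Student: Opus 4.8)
The plan is to reuse the edge-wise decomposition of the quadratic form established in the proof of Theorem~\ref{agpsd}, namely $x^\top\mathcal{L}x=\sum_{E\in\mathcal{E}}\sum_{i<j,\,i,j\in E}(x_i-x_j)^2$, and to feed a cleverly extended test vector into the variational characterization of Definition~\ref{defiACU}. Without loss of generality let $v_n$ be the removed vertex, so that $\mathcal{G}_1$ has vertex set $\{v_1,\dots,v_{n-1}\}$ and hyperedge set $\mathcal{E}_1=\{E\in\mathcal{E}:v_n\notin E\}$. Let $y\in\mathbb{R}^{n-1}$ be a Fiedler vector of $\mathcal{G}_1$, i.e.\ $y\perp{\bf 1}$, $\|y\|_2=1$ and $a(\mathcal{G}_1)=y^\top\mathcal{L}_{\mathcal{G}_1}y$. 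I would extend $y$ to $x\in\mathbb{R}^n$ by appending $x_n=0$. Because $\sum_{i=1}^{n-1}y_i=0$, the extended vector still satisfies $x\perp{\bf 1}$ and $\|x\|_2=\|y\|_2=1$, so $x$ is feasible for $\mathcal{G}$ and the minimality in Definition~\ref{defiACU} gives $a(\mathcal{G})\le x^\top\mathcal{L}x$.

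Next I would split the decomposition of $x^\top\mathcal{L}x$ according to whether a hyperedge contains $v_n$. Every hyperedge avoiding $v_n$ involves only coordinates on which $x$ coincides with $y$, so these hyperedges contribute exactly $y^\top\mathcal{L}_{\mathcal{G}_1}y=a(\mathcal{G}_1)$. Writing each incident hyperedge as $E=\{v_n,v_{k_1},\dots,v_{k_m}\}$ and using $x_n=0$, the pairs meeting $v_n$ contribute $\sum_{a}y_{k_a}^2$ while the remaining pairs contribute $\sum_{a<b}(y_{k_a}-y_{k_b})^2$. This yields
\[
a(\mathcal{G})\le a(\mathcal{G}_1)+\sum_{E\ni v_n}\Big(\sum_{1\le a<b\le m}(y_{k_a}-y_{k_b})^2+\sum_{a=1}^{m}y_{k_a}^2\Big),
\]
so the entire task reduces to bounding the correction term $R$ (the sum on the right-hand side) by $(2m-1)s$.

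Finally, I would estimate $R$ termwise. The inequality $(y_{k_a}-y_{k_b})^2\le 2(y_{k_a}^2+y_{k_b}^2)$, together with the fact that each index occurs in $m-1$ such pairs, gives $\sum_{a<b}(y_{k_a}-y_{k_b})^2\le 2(m-1)\sum_a y_{k_a}^2$, so the bracket for each incident hyperedge is at most $(2m-1)\sum_a y_{k_a}^2$. Summing over all hyperedges containing $v_n$ and interchanging the order of summation converts this into $(2m-1)\sum_{w\neq v_n}y_w^2\,\big|\{E\in\mathcal{E}:v_n,w\in E\}\big|$; the cardinality is at most $s$ by the definition of sparsity, and $\sum_{w}y_w^2\le\|y\|_2^2=1$, whence $R\le(2m-1)s$. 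Combining this with the displayed inequality yields $a(\mathcal{G}_1)\ge a(\mathcal{G})-(2m-1)s$. The one delicate point is tracking the constant: the factor $2(m-1)$ comes from the within-tail differences among the surviving vertices, and the extra $+1$ from the $m$ pairs touching the removed vertex, while the sparsity hypothesis is precisely what decouples the per-edge estimate from the (possibly large) degree of $v_n$.
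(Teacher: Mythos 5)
Your proof is correct and follows essentially the same route as the paper's: you extend the Fiedler vector of $\mathcal{G}_1$ by a zero in the removed coordinate, feed it into the variational characterization, split the quadratic form by whether a hyperedge contains $v_n$, and bound the incident-edge contribution by $(2m-1)s$ using $(a-b)^2\le 2(a^2+b^2)$ together with the sparsity count — exactly the paper's test vector, decomposition, and constants. The only difference is presentational: you obtain the key identity in one step by reusing the edge-wise decomposition $x^\top\mathcal{L}x=\sum_{E\in\mathcal{E}}\sum_{i<j,\,i,j\in E}(x_i-x_j)^2$ from the proof of Theorem~\ref{agpsd}, whereas the paper re-derives the same expression by a lengthy computation with tensor indices and $\delta$-functions, so your write-up is a cleaner rendering of the same argument.
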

    \begin{proof}
        Suppose we remove the last vertex $v_n$. Let $\mathcal{L}_1$ be the Laplacian of $\mathcal{G}_1$ and $y$ be the Fiedler vector. 
        
        The Laplacian tensor $\mathcal{L}$ of $\mathcal{G}$ can be divided into four parts: $\mathcal{L}[V_{n-1},V_{n-1}]$, $\mathcal{L}[V_{n-1},v_n]$, $\mathcal{L}[v_n,V_{n-1}]$ and $\mathcal{L}[v_n,v_n]=l_{n,\dots,n}={\rm deg}(v_n)$. In the first part $\mathcal{L}[V_{n-1},V_{n-1}]$, the diagonal elements vary due to the loss of degrees, denoting as $\mathcal{D}_1$. The elements corresponding to the hyperedges that contain $v_n$ also vary. The other elements are the same as $\mathcal{L}$. Thus we have
        \begin{equation*}
        \begin{aligned}
            &(y^\top\ 0)\mathcal{L}(y^\top\ 0)^\top\\
            =&y^\top (\mathcal{L}_1+\mathcal{D}_1)y+\sum_{i,j=1}^{n-1}\sum_{i_1,\dots,i_m=1}^n y_i y_j l_{i_1,\dots,i_m,j}\delta(i\in\{i_1,\dots,i_m\})\delta(n\in\{i_1,\dots,i_m,j\})\\
            =&a(\mathcal{G}_1)+y^\top \mathcal{D}_1 y+\sum_{i,j=1}^{n-1}\sum_{i_1,\dots,i_m=1}^n y_i y_j l_{i_1,\dots,i_m,j}\delta(i\in\{i_1,\dots,i_m\})\delta(n\in\{i_1,\dots,i_m\}).
        \end{aligned}
        \end{equation*}
        
        
        For $y^\top\mathcal{D}_1 y$ we have 
        \begin{equation*}
        \begin{aligned}
            y^\top \mathcal{D}_1 y
            &=\sum_{j=1}^{n-1}y_j^2 \sum_{i_1,\dots,i_m=1}^n ma_{i_1,\dots,i_m,j} \delta(n\in\{i_1,\dots,i_m\})
            =\sum_{j=1}^{n-1}y_j^2\sum_{i_2,\dots,i_m=1}^n m^2 a_{n,i_2,\dots,i_m,j}\\
            &=\sum_{j=1}^{n-1}y_j^2\sum_{i_2,\dots,i_m=1}^n m^2 a_{j,i_2,\dots,i_m,n}
            =\sum_{j=1}^{n-1}y_j^2\sum_{i_1,\dots,i_m=1}^n m a_{j,i_2,\dots,i_m,n}\delta(j\in\{i_1,\dots,i_m\})\\
            &=\sum_{i_1,\dots,i_m=1}^n a_{i_1,\dots,i_m,n}\sum_{j=1}^m m y_{i_j}^2 .
        \end{aligned}
        \end{equation*}
        For the last summation term we have
        \begin{equation*}
        \begin{aligned}
            &\sum_{i,j=1}^{n-1}\sum_{i_1,\dots,i_m=1}^n y_i y_j l_{i_1,\dots,i_m,j}\delta(i\in\{i_1,\dots,i_m\})\delta(n\in\{i_1,\dots,i_m\})\\
            =&\sum_{i,j=1}^{n-1}\sum_{i_3,\dots,i_m=1}^n m(m-1)y_i y_j l_{n,i,i_3,\dots,i_m,j}
            = \sum_{i,j=1}^{n-1} m(m-1)\left(\sum_{i_3,\dots,i_m=1}^n l_{i,j,i_3,\dots,i_m,n}\right)y_i y_j \\
            =& \sum_{i,j=1\atop i\neq j}^{n-1} \left(\sum_{i_1,\dots,i_m=1}^n l_{i_1,i_2,\dots,i_m,n}\right)\delta(i,j\in\{i_1,\dots,i_m\})y_i y_j 
            = \sum_{i_1,\dots,i_m=1}^n l_{i_1,i_2,\dots,i_m,n} \sum_{i,j\in\{i_1,\dots,i_m\}\atop i\neq j} y_i y_j.
        \end{aligned}
        \end{equation*}
        Therefore
        \begin{equation}
        \label{MainAC}
        \begin{aligned}
        &y^\top \mathcal{D}_1 y+\sum_{i,j=1}^{n-1}\sum_{i_1,\dots,i_m=1}^n y_i y_j l_{i_1,\dots,i_m,j}\delta(i\in\{i_1,\dots,i_m\})\delta(n\in\{i_1,\dots,i_m\})\\
        =&\sum_{i_1,\dots,i_m}^n a_{i_1,\dots,i_m,n}\left(\sum_{j=1}^m m y_{i_j}^2 -\sum_{i,j\in\{i_1,\dots,i_m\}\atop i\neq j} y_i y_j\right)
        =\sum_{E\in\mathcal{E}\atop v_n\in E}\sum_{1\leq i<j< n\atop v_i,v_j\in E} (y_i-y_j)^2+\sum_{E\in\mathcal{E}\atop v_n\in E}\sum_{v_i\in E\atop i<n} y_i^2.
        \end{aligned}
        \end{equation}
        The first term in \eqref{MainAC} is a quadratic form. As the sparsity is $s$, for all $1\leq i<n$ we have
        \begin{equation*}
            \vert\{E\in\mathcal{E}:v_i,v_n\in E\}\vert\leq s.
        \end{equation*}
        Therefore
        \begin{equation*}
            \left\vert\{(v_j,\ E):\ v_i,v_n,v_j\in E,\ E\in\mathcal{E}\}\right\vert\leq (m-1)s.
        \end{equation*}
        Then
        \begin{equation*}
        \begin{aligned}
            \sum_{E\in\mathcal{E}\atop v_n\in E}\sum_{1\leq i<j< n\atop v_i,v_j\in E} (y_i-y_j)^2
            &\leq \sum_{E\in\mathcal{E}\atop v_n\in E}\sum_{1\leq i<j< n\atop v_i,v_j\in E} 2(y_i^2+y_j^2)\\
            &\leq 2\sum_{i=1}^{n-1} y_i^2\left\vert\{(v_j,\ E):\ v_i,v_n,v_j\in E,\ E\in\mathcal{E}\}\right\vert\leq 2(m-1)s.
        \end{aligned}
        \end{equation*}
        
        For the second term, we have
        \begin{equation*}
            \sum_{E\in\mathcal{E}\atop v_n\in E}\sum_{v_i\in E\atop i<n} y_i^2
            =\sum_{i=1}^{n-1} \vert\{E\in\mathcal{E}:v_i,v_n\in E\}\vert y_i^2
            \leq \sum_{i=1}^{n-1} s y_i^2\leq s.
        \end{equation*}
        Therefore we can assert
        \begin{equation*}
            y^\top \mathcal{D}_1 y+\sum_{i,j=1}^{n-1}\sum_{i_1,\dots,i_m=1}^n y_i y_j l_{i_1,\dots,i_m,j}\delta(i\in\{i_1,\dots,i_m\})\delta(n\in\{i_1,\dots,i_m\})\leq (2m-1)s.
        \end{equation*}

        In conclusion, we have
        \begin{equation*}
            a(\mathcal{G})\leq (y^\top\ 0)\mathcal{L}(y^\top\ 0)^\top \leq a(\mathcal{G}_1)+ (2m-1)s,
            \label{Cdefi}
        \end{equation*}
    \end{proof}
    
    When $m=1$, we have $(2m-1)=1$ and $s=1$. The result is consistent with the result in graph theory \cite{1973Algebraic}.
    
    
    By induction we have a direct corollary.
    
    \begin{cor}
    \label{corCK}
        Let $\mathcal{G}$ be a hypergraph with sparsity $s$. 
        Let $\mathcal{H}$ be the hypergraph that removes $k$ vertices and their incident hyperedges from $\mathcal{G}$, then 
        \begin{equation*}
            a(\mathcal{H})\geq a(\mathcal{G})-(2m-1)sk.
        \end{equation*}

    \end{cor}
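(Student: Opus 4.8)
The plan is to prove the corollary by induction on the number $k$ of removed vertices, using Lemma~\ref{MainLM} as the single-step engine. The base case $k=0$ is the tautology $a(\mathcal{G}) \geq a(\mathcal{G})$, and the case $k=1$ is precisely Lemma~\ref{MainLM}. For the inductive step I would fix an ordering of the $k$ vertices to be deleted and build a chain of hypergraphs $\mathcal{H}_0 = \mathcal{G}, \mathcal{H}_1, \dots, \mathcal{H}_k = \mathcal{H}$, where $\mathcal{H}_{t}$ is obtained from $\mathcal{H}_{t-1}$ by removing a single vertex together with all of its incident hyperedges. In this way the global operation ``remove $k$ vertices'' is decomposed into $k$ instances of the single-vertex removal that Lemma~\ref{MainLM} already controls.

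The key observation is that the sparsity is monotone under this operation: writing $s_t$ for the sparsity of $\mathcal{H}_t$, one has $s = s_0 \geq s_1 \geq \cdots \geq s_k$. Granting this, Lemma~\ref{MainLM} applied to the single-vertex reduction from $\mathcal{H}_{t-1}$ to $\mathcal{H}_t$ yields
\begin{equation*}
    a(\mathcal{H}_t) \geq a(\mathcal{H}_{t-1}) - (2m-1)s_{t-1} \geq a(\mathcal{H}_{t-1}) - (2m-1)s,
\end{equation*}
where the second inequality uses $s_{t-1} \leq s$. Telescoping these $k$ inequalities over $t = 1, \dots, k$ gives
\begin{equation*}
    a(\mathcal{H}) = a(\mathcal{H}_k) \geq a(\mathcal{H}_0) - (2m-1)sk = a(\mathcal{G}) - (2m-1)sk,
\end{equation*}
which is exactly the claimed bound.

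The only point genuinely requiring justification is the sparsity monotonicity, which I expect to be the (minor) obstacle, since everything else is bookkeeping. By definition $s_t = \max_{v_i, v_j} \vert\{E \in \mathcal{E}_t : v_i, v_j \in E\}\vert$, where $\mathcal{E}_t$ denotes the hyperedge set of $\mathcal{H}_t$. Since passing from $\mathcal{H}_{t-1}$ to $\mathcal{H}_t$ only deletes a vertex and removes hyperedges, and never creates a new hyperedge, for every pair $v_i, v_j$ surviving in $\mathcal{H}_t$ we have the inclusion $\{E \in \mathcal{E}_t : v_i, v_j \in E\} \subseteq \{E \in \mathcal{E}_{t-1} : v_i, v_j \in E\}$, so the cardinality cannot grow and the maximum over surviving pairs can only decrease. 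This confirms $s_t \leq s_{t-1}$ and closes the induction. I would also remark that, because each step is actually controlled by the sparsity of the intermediate hypergraph rather than by $s$, the stated estimate is conservative: retaining the running sparsities yields the sharper bound $a(\mathcal{H}) \geq a(\mathcal{G}) - (2m-1)\sum_{t=1}^{k} s_{t-1}$.
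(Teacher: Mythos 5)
Your proof is correct and takes exactly the route the paper intends: the paper obtains Corollary~\ref{corCK} from Lemma~\ref{MainLM} with only the words ``by induction,'' and your chain decomposition $\mathcal{H}_0,\dots,\mathcal{H}_k$ together with the sparsity-monotonicity observation $s_t \leq s_{t-1}$ supplies precisely the bookkeeping that makes that induction valid. Your closing remark that keeping the running sparsities gives the sharper bound $a(\mathcal{H}) \geq a(\mathcal{G}) - (2m-1)\sum_{t=1}^{k} s_{t-1}$ is a correct (if minor) refinement of the stated estimate.
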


    Lemma \ref{lmsym} is a symmetric situation of Corollary \ref{corCK}.
    \begin{lm}
    \label{lmsym}
        Let $\mathcal{G}_i=(V_i,\mathcal{E}_i),\ i=1,2$ be a partition of the hypergraph $\mathcal{G}$, then
        \begin{equation*}
            a(\mathcal{G})\leq a(\mathcal{G}_1)+(2m-1)s\vert V_2\vert,a(\mathcal{G}_2)+(2m-1)s\vert V_1\vert).
        \end{equation*}
        
    \end{lm}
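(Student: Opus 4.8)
The plan is to derive this bound as a direct, two-sided application of Corollary \ref{corCK}. The key observation is that the partition of $\mathcal{G}$ into $\mathcal{G}_1$ and $\mathcal{G}_2$ is precisely the situation in which each half is recovered from $\mathcal{G}$ by deleting the opposite vertex block. Since $V = V_1 \sqcup V_2$ and every hyperedge of $\mathcal{G}$ either lies entirely inside one block or is cut by the partition, the sub-hypergraph $\mathcal{G}_1 = (V_1, \mathcal{E}_1)$ is exactly the induced hypergraph $\mathcal{G}(V \setminus V_2)$: removing the $\vert V_2\vert$ vertices of $V_2$ together with all their incident hyperedges discards the cut hyperedges and those internal to $V_2$, leaving $\mathcal{E}_1$. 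Symmetrically, $\mathcal{G}_2 = \mathcal{G}(V \setminus V_1)$.

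Granted this identification, I would first invoke Corollary \ref{corCK} with the removed hypergraph equal to $\mathcal{G}_1$ and $k = \vert V_2\vert$, giving $a(\mathcal{G}_1) \geq a(\mathcal{G}) - (2m-1)s\vert V_2\vert$, which rearranges to $a(\mathcal{G}) \leq a(\mathcal{G}_1) + (2m-1)s\vert V_2\vert$. Then I would apply the same corollary with the roles of the two blocks interchanged, taking the removed hypergraph to be $\mathcal{G}_2$ and $k = \vert V_1\vert$, to obtain $a(\mathcal{G}) \leq a(\mathcal{G}_2) + (2m-1)s\vert V_1\vert$. Taking the minimum of these two upper bounds yields
\begin{equation*}
    a(\mathcal{G}) \leq \min\{\, a(\mathcal{G}_1) + (2m-1)s\vert V_2\vert,\ a(\mathcal{G}_2) + (2m-1)s\vert V_1\vert \,\}.
\end{equation*}

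There is no genuine computational obstacle here: once Corollary \ref{corCK} is in hand, the estimate is a one-line consequence applied twice, with the symmetry between $V_1$ and $V_2$ supplying the second inequality for free. The only point that demands care is the bookkeeping described in the first paragraph, namely confirming that deleting an entire vertex block from $\mathcal{G}$ reproduces the corresponding half of the partition as an induced sub-hypergraph, so that the sparsity parameter $s$ is unchanged and the count $k$ fed into Corollary \ref{corCK} is exactly $\vert V_2\vert$ (respectively $\vert V_1\vert$). This is where the reading of ``partition'' as a vertex partition, with the crossing hyperedges dropped, must be made explicit.
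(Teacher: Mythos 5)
Your proposal is correct and coincides with the paper's own (implicit) argument: the paper offers no separate proof, presenting the lemma simply as ``a symmetric situation of Corollary \ref{corCK}'', which is exactly your two-sided application of that corollary after identifying each block $\mathcal{G}_i$ with the induced sub-hypergraph obtained by deleting the opposite vertex block and its incident hyperedges. Your added bookkeeping --- that the crossing hyperedges are discarded, that $k=\vert V_2\vert$ (resp.\ $\vert V_1\vert$), and that the sparsity bound $s$ of $\mathcal{G}$ still applies --- is precisely what the paper leaves tacit.
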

    
    From Lemma \ref{lmsym} and \ref{agpsd} we can construct the relationship between the algebraic connectivity and the vertex connectivity.
    \begin{thm}
        Suppose a hypergraph $\mathcal{G}$ has sparsity $s$. Then
        \begin{equation*}
            a(\mathcal{G})\leq (2m-1)s v(\mathcal{G}).
        \end{equation*}
    \end{thm}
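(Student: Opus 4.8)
The plan is to reduce the statement to two facts that have already been established: the quantitative vertex-removal bound of Corollary~\ref{corCK}, and the fact from Theorem~\ref{agpsd} that a hypergraph is disconnected exactly when its algebraic connectivity vanishes. All of the analytic work is already contained in Lemma~\ref{MainLM} (and its iterate Corollary~\ref{corCK}); what remains is simply to apply that bound to a cleverly chosen set of removed vertices, namely a minimum cutset.

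First I would fix a minimum vertex cutset $\widehat{V}\subseteq V$, so that $|\widehat{V}| = v(\mathcal{G})$ and the induced sub-hypergraph $\mathcal{H} := \mathcal{G}(V\setminus\widehat{V})$ — obtained by deleting the vertices of $\widehat{V}$ together with all their incident hyperedges — is not connected. Such a set exists by the very definition of $v(\mathcal{G})$ as the minimum size of a set whose removal breaks connectivity. Since $\mathcal{H}$ is disconnected, Theorem~\ref{agpsd} gives $a(\mathcal{H}) = 0$.

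Now I observe that $\mathcal{H}$ is precisely the hypergraph obtained from $\mathcal{G}$ by removing $k = v(\mathcal{G})$ vertices and their incident hyperedges, so Corollary~\ref{corCK} applies verbatim and yields
\[
0 = a(\mathcal{H}) \geq a(\mathcal{G}) - (2m-1)s\,v(\mathcal{G}),
\]
which rearranges to the claimed inequality $a(\mathcal{G}) \leq (2m-1)s\,v(\mathcal{G})$. Equivalently, one may invoke the symmetric Lemma~\ref{lmsym} with the partition $V_1 = \widehat{V}$ and $V_2 = V\setminus\widehat{V}$: there $\mathcal{G}_2 = \mathcal{H}$ is disconnected, so $a(\mathcal{G}_2)=0$, and the second entry of the minimum reads $a(\mathcal{G}_2) + (2m-1)s\,|V_1| = (2m-1)s\,v(\mathcal{G})$.

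I do not expect a serious obstacle at this stage, since the per-vertex loss $(2m-1)s$ is inherited directly from Lemma~\ref{MainLM}. The only point requiring a little care is the bookkeeping: one must confirm that "deleting a minimum cutset" is genuinely an instance of the vertex-deletion operation quantified by Corollary~\ref{corCK}, and that the resulting $\mathcal{H}$ truly satisfies the disconnectedness hypothesis of Theorem~\ref{agpsd} — in particular that the cutset leaves at least two vertices lying in distinct components, which is exactly what the definition of a cutset guarantees. With that verified, the constant is fixed and there is nothing further to optimize.
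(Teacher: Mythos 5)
Your proposal is correct and follows exactly the route the paper intends: the paper derives this theorem directly from Lemma~\ref{lmsym} (equivalently Corollary~\ref{corCK}) together with Theorem~\ref{agpsd}, applied to a minimum cutset whose removal leaves a disconnected hypergraph with algebraic connectivity zero. Your filling-in of the bookkeeping (including the alternative phrasing via the partition $V_1=\widehat{V}$, $V_2=V\setminus\widehat{V}$) matches the paper's argument.
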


\subsection{Algebraic Connectivity Maximization}
    Algebraic connectivity maximization is a common problem in network science. There are studies in different scenarios, leading to different constraints. In this section we generalize two classic scenarios to the hypergraph. The first one is about the hyperedge rewiring or adding problem. The graph situation is studied in \cite{bisection2010,ghosh2006growing,rewiring2008,sydney2013optimizing}. In both rewiring and adding, we focus on the influence of a certain hyperedge. 
    In another aspect, when we want to increase the robustness of a hypergraph complex network, we always consider the algebraic connectivity maximization via the hyperedge adding or rewiring method. Therefore we consider adding a hyperedge $E_0\notin\mathcal{E}$ to $\mathcal{G}$, denoting as $\mathcal{G}'=\mathcal{G}+E_0=(V,\mathcal{E}\cup\{E_0\})$. Let $q_2$ be the Fiedler vector of $\mathcal{G}$. By considering $q_2^\top \mathcal{L}_{\mathcal{G}'}q_2$ we have an upper bound for $a(\mathcal{G}')$.
    
    \begin{prop}
    \label{Propedgeub}
    \begin{equation*}
        a(\mathcal{G}')\leq a(\mathcal{G})+q_2^\top \mathcal{L}_{E_0} q_2.
    \end{equation*} 
    \end{prop}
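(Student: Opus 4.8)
The plan is to exploit the variational characterization of the algebraic connectivity from Definition \ref{defiACU} together with the additivity of the Laplacian tensor, using the Fiedler vector $q_2$ of $\mathcal{G}$ as a trial vector in the minimization that defines $a(\mathcal{G}')$. This turns the statement into a one-line Rayleigh-quotient upper bound.

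First I would record the additivity of the Laplacian tensor. Since $\mathcal{G}' = \mathcal{G} + E_0$ adds exactly one hyperedge, the decomposition $\mathcal{L} = \sum_{E \in \mathcal{E}} \mathcal{L}_E$ established in the proof of Theorem \ref{agpsd} gives immediately $\mathcal{L}_{\mathcal{G}'} = \mathcal{L}_{\mathcal{G}} + \mathcal{L}_{E_0}$, and hence $x^\top \mathcal{L}_{\mathcal{G}'} x = x^\top \mathcal{L}_{\mathcal{G}} x + x^\top \mathcal{L}_{E_0} x$ for every $x$. Next I would verify that $q_2$ is admissible in the variational problem for $a(\mathcal{G}')$: by the proposition identifying $a(\mathcal{G})$ with the second smallest eigenvalue of $\mathcal{L}$ and by the symmetry of $\phi(\mathcal{L})$ from Lemma \ref{lmSymMax}, the Fiedler vector satisfies $\Vert q_2 \Vert_2 = 1$ and $q_2 \perp {\bf 1}$, because ${\bf 1}$ is the eigenvector for the smallest eigenvalue $0$ and eigenvectors of the symmetric matrix $\phi(\mathcal{L})$ belonging to distinct eigenvalues are orthogonal. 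In particular $q_2^\top \mathcal{L}_{\mathcal{G}} q_2 = a(\mathcal{G})$.

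Since $q_2$ is a feasible point of the minimization in Definition \ref{defiACU} applied to $\mathcal{G}'$, I then conclude
\begin{equation*}
a(\mathcal{G}') = \min_{x \perp {\bf 1},\, \Vert x \Vert_2 = 1} x^\top \mathcal{L}_{\mathcal{G}'} x \leq q_2^\top \mathcal{L}_{\mathcal{G}'} q_2 = q_2^\top \mathcal{L}_{\mathcal{G}} q_2 + q_2^\top \mathcal{L}_{E_0} q_2 = a(\mathcal{G}) + q_2^\top \mathcal{L}_{E_0} q_2.
\end{equation*}
There is no genuine obstacle in this argument; it is a direct application of the fact that the minimum of a quadratic form over the feasible set is bounded above by its value at any single feasible point. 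The only items needing a word of justification are the additivity of the Laplacian tensor, which is already available from the proof of Theorem \ref{agpsd}, and the admissibility of $q_2$ (unit norm and orthogonality to ${\bf 1}$), which follows from the symmetry of $\phi(\mathcal{L})$ in Lemma \ref{lmSymMax}. I would emphasize that the bound is only an upper estimate precisely because $q_2$ need not remain the Fiedler vector of the enlarged hypergraph $\mathcal{G}'$.
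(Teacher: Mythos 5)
Your proof is correct and follows exactly the paper's approach: the paper also obtains this bound by using the Fiedler vector $q_2$ of $\mathcal{G}$ as a test vector in the variational characterization of $a(\mathcal{G}')$, together with the additivity $\mathcal{L}_{\mathcal{G}'}=\mathcal{L}_{\mathcal{G}}+\mathcal{L}_{E_0}$. Your writeup is in fact more explicit than the paper's one-line justification, since you also verify the admissibility of $q_2$ (unit norm and orthogonality to ${\bf 1}$) via Lemma \ref{lmSymMax}.
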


    Then we move onto the lower bound. A analysis for the $3$-uniform hypergraph is established. Suppose $\lambda_1\leq\cdots\leq\lambda_n$ is the eigenvalue of $\mathcal{L}$. By the interlacing theorem we can assert $\lambda_2\leq a(\mathcal{G}+E_0)\leq\lambda_4$. However, we cannot assert $a(\mathcal{G}+E_0)\leq\lambda_3$ like the result in graph theory, because there is a simple counter-example by considering $\lambda_2=\lambda_3$. If $a(\mathcal{G}+E_0)\leq\lambda_3$, we have such result.
        
    
    \begin{thm}
        Let $\mathcal{G}$ be a $3$-uniform hypergraph and $\mathcal{G}+E_0$ be the hypergraph by adding a hyperedge $E_0$. $\lambda_1\leq\cdots\leq \lambda_n$ be the eigenvalue of $\mathcal{L}_{\mathcal{G}}$ and $q_2$ be the Fiedler vector. If $ a(\mathcal{G}+E_0)\leq\lambda_3$, then there exists a lower bound for $a(\mathcal{G}+E_0)$ in the interval $(\lambda_2,\lambda_3)$ that monotonically increases with $q_2^\top \mathcal{L}_{E_0} q_2$.
    \label{Thmedgelb}
    \end{thm}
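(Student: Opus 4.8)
The plan is to realize the added hyperedge as a low-rank positive semidefinite perturbation of the symmetric matrix $L:=\phi(\mathcal{L})$ and then track the relevant eigenvalue through a scalar secular equation. First I would record, exactly as in the proof of Theorem \ref{agpsd}, that for a single $3$-uniform hyperedge $E_0=\{v_a,v_b,v_c\}$ the quadratic form is $x^\top\mathcal{L}_{E_0}x=\sum_{i<j,\ v_i,v_j\in E_0}(x_i-x_j)^2$, i.e.\ the Laplacian form of the triangle $K_3$. Hence $P:=\phi(\mathcal{L}_{E_0})$ is positive semidefinite with spectrum $\{0,3,3\}$ on its support; equivalently $P=3\Pi$, where $\Pi$ is the orthogonal projection onto the two-dimensional space $\{x:\operatorname{supp}x\subseteq E_0,\ \sum_{v_i\in E_0}x_i=0\}$. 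In particular $P\mathbf 1=0$ and $P^2=3P$, and (by Lemma \ref{lmSymMax} and Definition \ref{defiACU}) $a(\mathcal{G}+E_0)$ is the smallest eigenvalue of $L+P$ restricted to $\mathbf 1^{\perp}$, while the Fiedler weight is $t:=q_2^\top\mathcal{L}_{E_0}q_2=q_2^\top Pq_2$.

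The central device is to replace the rank-two perturbation $P$ by a rank-one perturbation that preserves this weight while only decreasing the eigenvalue. Writing $P=3\Pi$ and $z:=\Pi q_2/\|\Pi q_2\|$ (a unit vector, defined when $t>0$), I would set $R:=3\,zz^\top$. Then $P-R=3(\Pi-zz^\top)\succeq0$ and $R\mathbf 1=0$, so $L+R\preceq L+P$ on $\mathbf 1^{\perp}$ and hence $a(\mathcal{G}+E_0)\ge\lambda_*$, where $\lambda_*$ is the smallest eigenvalue of $L+R$ on $\mathbf 1^{\perp}$. The point of this choice is that $\|\Pi q_2\|^2=q_2^\top\Pi q_2=t/3$, so the rank-one weight is the \emph{constant} $\sigma=3$, while $\sigma(z^\top q_2)^2=3\,\|\Pi q_2\|^2=t$ reproduces the Fiedler weight exactly. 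The problem then collapses to the scalar secular equation $1+\sigma\sum_{k\ge2}(z^\top q_k)^2/(\lambda_k-\lambda)=0$; since $z\perp\mathbf 1$ and $z^\top q_2\ne0$, its root in $(\lambda_2,\lambda_3)$ is precisely $\lambda_*$.

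To extract the bound I would evaluate the secular equation at $\lambda_*$, isolate the $k=2$ term (weight $t$, negative on $(\lambda_2,\lambda_3)$), and bound the positive tail using $\lambda_k-\lambda_*\ge\lambda_3-\lambda_*>0$ together with $\sum_{k\ge2}(z^\top q_k)^2=1$, so that $\sigma\sum_{k\ge3}(z^\top q_k)^2=3-t$. With $u:=\lambda_*-\lambda_2$ and $\Delta:=\lambda_3-\lambda_2$ this gives $1-\frac{t}{u}+\frac{3-t}{\Delta-u}\ge0$, which rearranges to the quadratic inequality $u^2-(\Delta+3)u+t\Delta\le0$. Since $t\le\lambda_{\max}(P)=3$, the discriminant $(\Delta+3)^2-4t\Delta\ge(\Delta-3)^2\ge0$ is nonnegative and $u$ must exceed the smaller root, yielding the explicit lower bound
\[
a(\mathcal{G}+E_0)\ \ge\ \lambda_2+\frac{(\Delta+3)-\sqrt{(\Delta+3)^2-4t\Delta}}{2}\ =:\ f(t).
\]
It remains to verify the two claimed properties of $f$: it is strictly increasing in $t$ because $(\Delta+3)^2-4t\Delta$ decreases in $t$, and $f(0)=\lambda_2$ while $f(t)<\lambda_3$ for $0<t<3$ (the quadratic is nonpositive at $u=\Delta$, where its value is $\Delta(t-3)\le0$), so $f$ maps into $(\lambda_2,\lambda_3)$ as required.

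I expect the main obstacle to be the rank-two nature of $P$: a direct secular analysis is a $2\times2$ determinant whose root depends on more data than the single scalar $t$. The decisive steps are the rank-one comparison $R\preceq P$, which retains a valid lower bound, and the projection identity $P^2=3P$ special to the $3$-uniform case, which fixes $\sigma=3$ and is exactly what reduces the tail estimate to a closed-form quadratic in one variable. The hypothesis $a(\mathcal{G}+E_0)\le\lambda_3$ is precisely what licenses bounding the tail by its value at $\lambda_3$, confining $\lambda_*$ to $(\lambda_2,\lambda_3)$ and making the quadratic meaningful.
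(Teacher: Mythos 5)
Your proof is correct, and it takes a genuinely different route from the paper's. The paper keeps the full rank-two perturbation $\phi(\mathcal{L}_{E_0})=3(u_1u_1^\top+u_2u_2^\top)$: it expands the two-vector secular determinant $f(\lambda)$ of the rank-two update, discards the (negative) cross terms, and bounds the tail sums crudely to obtain the majorant $g(\lambda)=1+\frac{t}{\lambda_2-\lambda}+\frac{6}{\lambda_3-\lambda}+\frac{9}{(\lambda_3-\lambda)^2}$ (with $t=q_2^\top\mathcal{L}_{E_0}q_2$), whose root in $(\lambda_2,\lambda_3)$ is the implicit lower bound, monotone in $t$ by inspection of $g$. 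You instead pass from $P=\phi(\mathcal{L}_{E_0})=3\Pi$ to the rank-one minorant $R=3zz^\top$ with $z=\Pi q_2/\|\Pi q_2\|$, calibrated so that $3(z^\top q_2)^2=t$ while $R\preceq P$, and then solve a scalar rank-one secular equation with a single tail estimate. Your route buys three things: an explicit closed-form bound $\lambda_2+\frac{1}{2}\bigl((\Delta+3)-\sqrt{(\Delta+3)^2-4t\Delta}\,\bigr)$ (in your notation $\Delta=\lambda_3-\lambda_2$) instead of an implicitly defined root; freedom from the hypothesis $a(\mathcal{G}+E_0)\le\lambda_3$, since rank-one interlacing on $\mathbf{1}^\perp$ confines $\lambda_*$ to $[\lambda_2,\lambda_3]$ automatically, so your closing remark that the hypothesis ``licenses'' the tail estimate undersells your own argument --- it is simply not needed, and you prove a stronger statement than the theorem asserts; and in fact a sharper bound, because your secular majorant $1+\frac{t}{\lambda_2-\lambda}+\frac{3-t}{\lambda_3-\lambda}$ is dominated by the paper's $g$ pointwise on $(\lambda_2,\lambda_3)$, so your root never lies to the left of theirs. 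To be complete you should add three one-line checks: (i) if $t=0$ then $z$ is undefined, but the claimed bound degenerates to $a(\mathcal{G}+E_0)\ge\lambda_2$, which holds since $P\succeq0$; (ii) if $t>0$ and $\lambda_2<\lambda_3$ then $\lambda_*>\lambda_2$ (equality would force the minimizer over $\mathbf{1}^\perp$ to be $\pm q_2$ and hence $t=0$), which together with $\lambda_*<\lambda_3$ is what legitimizes evaluating the secular equation at $\lambda_*$ (no eigenvalue of $L$ lies in the open interval); (iii) if $\lambda_*=\lambda_3$ the secular evaluation breaks down, but the conclusion is then immediate because the quadratic $u^2-(\Delta+3)u+t\Delta$ is nonpositive at $u=\Delta$, giving $f(t)\le\lambda_3=\lambda_*$ directly. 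With those lines added, your argument is complete and self-contained.
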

    \begin{proof}
    In this proof we always consider the quadratic form of $\mathcal{L}$, therefore we can consider the matrix representation $\phi(\mathcal{L})$. 
    Let $\phi(\mathcal{L})=Q\Lambda Q^\top$ be the eigenvalue decomposition of $\phi(\mathcal{L})$ and $Q=[q_1|\dots|q_n]$ be the normalized eigenvectors. $q_1=\frac{1}{\sqrt{n}}{\bf 1}$ and $q_2$ is the Fiedler vector. Let $\phi(\mathcal{L}_{E_0})$ be the matrix representation of the Laplacian tensor of $E_0$ and has eigenvalue decomposition $\phi(\mathcal{L}_{E_0})=3(u_1 u_1^\top+u_2 u_2^\top)$. 
    We have $(Q^\top u_i)_j=q_j^\top u_i$ for $j\in [n]$. Further, we have
    \begin{equation}
    \begin{aligned}
        \det(\phi(\mathcal{L})+\phi(\mathcal{L}_{E_0})-\lambda I)=&\det(\Lambda+3\sum_{i=1,2}(Q^\top u_i)(Q^\top u_i)^\top-\lambda I)\\
        =&\det(\Lambda-\lambda I)\det(I+3(\Lambda-\lambda I)^{-1}\sum_{i=1,2}(Q^\top u_i)(Q^\top u_i)^\top).
    \end{aligned}
    \label{templabel1}
    \end{equation}
    For the last term in \eqref{templabel1}, according to \cite[Thm 2.3]{wu2012relationship} we have
    \begin{equation}
    \begin{aligned}
        f(\lambda):=&\det(I+3(\Lambda-\lambda I)^{-1}\sum_{i=1}^2(Q^\top u_i)(Q^\top u_i)^\top)\\
        =&1+3\sum_{i=1}^n \frac{\sum_{j=1,2}(q_i^\top u_j)^2}{\lambda_i-\lambda}+
        9\left(\left(\sum_{i=1}^n \frac{(q_i^\top u_1)^2}{\lambda_i-\lambda}\right)\left(\sum_{i=1}^n\frac{(q_i^\top u_2)^2}{\lambda_i-\lambda} \right)
        -\left(\sum_{i=1}^n \frac{(q_i^\top u_1) (q_i^\top u_2)}{\lambda_i-\lambda}\right)^2 \right).\\
    \end{aligned}
    \label{expadd3}
    \end{equation}
    As $q_i=\frac{1}{\sqrt{n}} {\bf 1}$ and $u_i\perp {\bf 1}$, all the summation in \eqref{expadd3} actually starts from $i=2$.
    
    As $\lambda_2<\lambda<\lambda_3$, then $f(\lambda)<0$ leads to $\lambda\leq a(\mathcal{G}')$ and therefore can serve as a lower bound. 
    Therefore we need to maximize $\lambda$ while keeping $f(\lambda)\leq 0$.
    As $\lambda_2-\lambda<0$ and $\lambda_i-\lambda>0$ for $i=3,\dots,n$, from a direct computation we have
    \begin{equation*}
    \begin{aligned}
        \left(\sum_{i=2}^n \frac{(q_i^\top u_1)^2}{\lambda_i-\lambda}\right)\left(\sum_{i=2}^n\frac{(q_i^\top u_2)^2}{\lambda_i-\lambda} \right)
        =&\left(\sum_{i=3}^n\frac{(q_i^\top u_1)^2}{\lambda_i-\lambda}\right)\left(\sum_{i=3}^n\frac{(q_i^\top u_2)}{\lambda_i-\lambda}\right)+\frac{(q_2^\top u_1)^2(q_2^\top u_2)^2}{(\lambda_2-\lambda_i)^2}\\
        &+
        \left(\frac{(q_2^\top u_1)^2}{\lambda_2-\lambda}\right)\left(\sum_{i=3}^n\frac{(q_i^\top u_2)^2}{\lambda_i-\lambda}\right)
        +\left(\frac{(q_2^\top u_2)^2}{\lambda_2-\lambda}\right)\left(\sum_{i=3}^n\frac{(q_i^\top u_1)^2}{\lambda_i-\lambda}\right),
    \end{aligned}        
    \end{equation*}
    and
    \begin{equation*}
    \begin{aligned}
        \left(\sum_{i=2}^n \frac{(q_i^\top u_1) (q_i^\top u_2)}{\lambda_i-\lambda}\right)^2
        &=\frac{(q_2^\top u_1)^2(q_2^\top u_2)^2}{(\lambda_2-\lambda_i)^2}+\left(\sum_{i=3}^n \frac{(q_i^\top u_1) (q_i^\top u_2)}{\lambda_i-\lambda}\right)^2\\
        &+2\left(\frac{(q_2^\top u_1) (q_2^\top u_2)}{\lambda_2-\lambda}\right)\left(\sum_{i=3}^n \frac{(q_i^\top u_1) (q_i^\top u_2)}{\lambda_i-\lambda}\right).
    \end{aligned}
    \end{equation*}
    Therefore the last term in \eqref{expadd3} can be computed as
    \begin{equation}
    \begin{aligned}
        &\left(\sum_{i=2}^n \frac{(q_i^\top u_1)^2}{\lambda_i-\lambda}\right)\left(\sum_{i=2}^n\frac{(q_i^\top u_2)^2}{\lambda_i-\lambda} \right)
        -\left(\sum_{i=2}^n \frac{(q_i^\top u_1) (q_i^\top u_2)}{\lambda_i-\lambda}\right)^2 \\
        =&\left(\sum_{i=3}^n \frac{(q_i^\top u_1)^2}{\lambda_i-\lambda}\right)\left(\sum_{i=3}^n\frac{(q_i^\top u_2)^2}{\lambda_i-\lambda} \right)
        -\left(\sum_{i=3}^n \frac{(q_i^\top u_1) (q_i^\top u_2)}{\lambda_i-\lambda}\right)^2 
        -\left(\frac{(q_2^\top u_1)^2}{\lambda-\lambda_2}\right)\left(\sum_{i=3}^n\frac{(q_i^\top u_2)^2}{\lambda_i-\lambda}\right)\\
        &-\left(\frac{(q_2^\top u_2)^2}{\lambda-\lambda_2}\right)\left(\sum_{i=3}^n\frac{(q_i^\top u_1)^2}{\lambda_i-\lambda}\right)
        +2\left(\frac{(q_2^\top u_1) (q_2^\top u_2)}{\lambda-\lambda_2}\right)\left(\sum_{i=3}^n \frac{(q_i^\top u_1) (q_i^\top u_2)}{\lambda_i-\lambda}\right) .       
    \end{aligned}
    \label{expall1}
    \end{equation}
    According to the mean value inequality, the sum of the last three terms in \eqref{expall1} is negative. 
    As $Q$ is normalized orthogonal, $u_t=\sum_{i=1}^n (q_i^\top u_t)q_i$ and therefore $\sum_{i=1}^n (q_i^\top u_t)^2=\Vert u\Vert^2=1$ for $t=1,2$.
    For $3((q_2^\top u_1)^2+(q_2^\top u_2)^2)$ we have
    \begin{equation*}
        3\sum_{i=1,2}(q_2^\top u_i)^2=3\sum_{i=1,2}(q_2^\top u_i)(u_i^\top q_2)=3\sum_{t=1,2}q_2^\top(u_i u_i^\top)q_2=q_2^\top\left(3\sum_{t=1,2}u_i u_i^\top\right)q_2=q_2^\top L_{E_0} q_2.
    \end{equation*}
    Therefore $f(\lambda)$ have a upper bound
    \begin{equation}
    \begin{aligned}
        f(\lambda)&\leq 1+3\sum_{i=2}^n \frac{\sum_{t=1,2}(q_i^\top u_t)^2}{\lambda_i-\lambda}+
        9\left(\left(\sum_{i=3}^n \frac{(q_i^\top u_1)^2}{\lambda_i-\lambda}\right)\left(\sum_{i=3}^n\frac{(q_i^\top u_2)^2}{\lambda_i-\lambda} \right)
        -\left(\sum_{i=3}^n \frac{(q_i^\top u_1) (q_i^\top u_2)}{\lambda_i-\lambda}\right)^2 \right)\\
        &\leq 1+\frac{q_2^\top L_{E_0} q_2}{\lambda_2-\lambda}+3\frac{\sum_{i=3}^n (q_i^\top u_1)^2+\sum_{i=3}^n (q_i^\top u_2)^2}{\lambda_3-\lambda}+9\left(\sum_{i=3}^n \frac{(q_i^\top u_1)^2}{\lambda_i-\lambda}\right)\left(\sum_{i=3}^n\frac{(q_i^\top u_2)^2}{\lambda_i-\lambda} \right)\\
        &\leq 1+\frac{q_2^\top L_{E_0} q_2}{\lambda_2-\lambda}+\frac{6}{\lambda_3-\lambda}+\frac{9}{(\lambda_3-\lambda)^2}.
    \end{aligned}
    \label{eqfinal}
    \end{equation}
    Use $g(\lambda)$ to represent the last line in equation \eqref{eqfinal}.
    $g(\lambda)$ monotonically increases with $\lambda$ in the open interval $(\lambda_2,\lambda_3)$. The only real root in this interval is an lower bound for $a(\mathcal{G}+E_0)$. As $\frac{q_2^\top L_{E_0} q_2}{\lambda_2-\lambda}<0$ and the other parts in $g(\lambda)$ is positive, by considering the monotonicity of $\frac{1}{\lambda_2-\lambda}$ and $\frac{1}{\lambda_3-\lambda}$ we can assert that this root monotonically increases with $q_2^\top L_{E_0} q_2$.

    \end{proof}

    The condition $\lambda_2\leq \lambda(\mathcal{G}+E_0)\leq \lambda_3$ can be determined by several ways. A simple way is using the Proposition \ref{Propedgeub}. If $\lambda_3-\lambda_2\geq q_2^\top\mathcal{L}q_2$, then we have $a(\mathcal{G}+E_0)\leq \lambda_3$. A more concise estimation can be done base on \eqref{expadd3}. When $\lambda\to\lambda_2^+$, $f(\lambda)\to-\infty$. Therefore $a(\mathcal{G}+E_0)\leq \lambda_3$ if and only if $\lim_{\lambda\to\lambda_3^-} f(\lambda)=+\infty$. When $\lambda\to\lambda_3^-$, we have
    \begin{equation*}
    \begin{aligned}
        f(\lambda)=
        &O(1)
        +\frac{q_3^\top \mathcal{L}_{E_0}q_3}{\lambda_3-\lambda}
        -9\frac{(q_2^\top u_1)^2(q_3^\top u_2)^2+(q_2^\top u_2)^2(q_3^\top u_1)^2-2(q_2^\top u_1) (q_2^\top u_2)(q_3^\top u_1) (q_3^\top u_2)}{(\lambda-\lambda_2)(\lambda_3-\lambda)}\\
        &+9\left(\frac{(q_3^\top u_1)^2}{\lambda_3-\lambda}\right)\left(\sum_{i=4}^n\frac{(q_i^\top u_2)^2}{\lambda_i-\lambda}\right)
        +9\left(\frac{(q_3^\top u_2)^2}{\lambda_3-\lambda}\right)\left(\sum_{i=4}^n\frac{(q_i^\top u_1)^2}{\lambda_i-\lambda}\right)\\
        &-18\left(\frac{(q_3^\top u_1) (q_3^\top u_2)}{\lambda_3-\lambda}\right)\left(\sum_{i=4}^n \frac{(q_i^\top u_1) (q_i^\top u_2)}{\lambda_i-\lambda}\right) \\
        \geq&O(1)
        +\frac{q_3^\top \mathcal{L}_{E_0}q_3}{\lambda_3-\lambda}
        -9\frac{\left((q_2^\top u_1)(q_3^\top u_2)-(q_2^\top u_2)(q_3^\top u_1)\right)^2}{(\lambda-\lambda_2)(\lambda_3-\lambda)}.\\
    \end{aligned}
    \end{equation*}
    $u_1$ and $u_2$ are the eigenvector of $\mathcal{L}_{E_0}$. Without loss of generality we may assume $E_0=\{1,2,3\}$ and let $u_1=\frac{1}{\sqrt{2}}(1,-1,0,{\bf 0})^\top$, $u_2=\frac{1}{\sqrt{6}}(1,1,-2,{\bf 0})^\top$. Then we have
    \begin{equation*}
        9\frac{\left((q_2^\top u_1)(q_3^\top u_2)-(q_2^\top u_2)(q_3^\top u_1)\right)^2}{(\lambda-\lambda_2)}
        =9\frac{\left(q_2^\top (u_1 u_2^\top-u_2 u_1^\top)q_3\right)^2}{(\lambda-\lambda_2)}
        =3\sqrt{3}\frac{(q_2^\top \widehat{L}q_3)^2}{\lambda-\lambda_2},
    \end{equation*}
    where
    \begin{equation*}
    \widehat{L}=
    \begin{pmatrix}
        \begin{array}{ccc}
            0 & 1 & -1 \\
            -1 & 0 & 1 \\
            1 & -1 & 0 \\
        \end{array} & {\bf O}\\
        {\bf O} & {\bf O}\\
    \end{pmatrix}.
    \end{equation*}
    If $q_3^\top \mathcal{L}_{E_0}q_3>3\sqrt{3}\frac{(q_2^\top \widehat{L}q_3)^2}{\lambda_3-\lambda_2}$, we have $f(\lambda)\to+\infty$ as $\lambda\to\lambda_3^-$, therefore $a(\mathcal{G}+E_0)\leq \lambda_3$. 

    Based on Prop \ref{Propedgeub} and Thm \ref{Thmedgelb}, $a(\mathcal{G}+E_0)$ can be both lower and upper bounded by terms that are positive related with $q_2^\top \mathcal{L}_{E_0} q_2$.
    Therefore we purpose a hyperedge adding or rewiring algorithm using $q_2^\top \mathcal{L}_{E_0} q_2$ as a metric.

    \begin{algorithm}[H]
    \caption{Hyperedge Rewiring or Adding} 
    \hspace*{0.02in} {\bf Input:} 
    A hypergraph $\mathcal{G}=(V,\mathcal{E})$ and its Laplacian tensor $\mathcal{L}$, number of hyperedge to rewiring or adding $N$.\\
    \hspace*{0.02in} {\bf Output:} 
    The hyperedge set $\mathcal{E}$ after rewiring.
    \begin{algorithmic}[1]
    
    \State $i=0$.
    \While{$i<N$} 
        \State Compute the algebraic connectivity and the Fiedler vector $x$ of $\mathcal{G}$.
        \State Find $E_{0}\in\mathcal{E}$ such that $x^\top \mathcal{L}_{E_0} x$ is minimized. If not rewiring, omit this step.
        \State Find $E_{1}\notin\mathcal{E}$ such that $x^\top \mathcal{L}_{E_1} x$ is maximized.
        \State Add $E_1$ into $\mathcal{E}$. Remove $E_0$ from $\mathcal{E}$ if rewiring.
        \State $i=i+1$.
    \EndWhile
    \State \Return $\mathcal{E}$.
    \end{algorithmic}
    \end{algorithm}

    We make some numerical examples, which are shown in Section 5.2.
    
    The second case is a generalization of the alternate randomized consensus under communication cost constraints (ARCCC) problem in \cite{kar2008sensor}. The original randomized consensus under communication cost constraints (RCCC) problem aims to maximize a nonconvex convergence measure, and the ARCCC problem uses the algebraic connectivity as a substitute, leading to a convex problem.
    The ARCCC problem can be generalized to the hypergraph ARCCC problem. Let $w_{\widehat{i}}$ be the weight of a hyperedge $\{v_{i_1},\dots,v_{i_{m+1}}\}$ and $c_{\widehat{i}}$ be its cost. Then the adjacency tensor for the weighed hypergraph satisfies $a_{\widehat{i}}=\frac{w_{\widehat{i}}}{m!}$. The upper bound for the total cost is $U$. Then the hypergraph ARCCC problem can be described as
    \begin{equation*}
    	\max_{\mathcal{L}} \lambda_2(\mathcal{L}), 
        \begin{aligned}
                        {\rm subject\ to\ } 
            \begin{cases}
             -1\leq \mathcal{L}_{\widehat{i}}\leq 0 {\rm\ for\ } \widehat{i}\neq(i_1,\dots,i_1),\\
            \mathcal{L} {\rm \ is\ permutation\ invariant,}\\ 
             \mathcal{L}_{\widehat{i}} =0 {\rm\ if\ } i_1=i_2 {\rm\ and\ } \widehat{i}\neq(i_1,\dots,i_1),\\
             {\bf 1}^\top \mathcal{L}={\bf 0},\\
            \sum_{\widehat{i}} \mathcal{L}_{\widehat{i}} c_{\widehat{i}}\leq U.
            \end{cases}
        \end{aligned}
    \end{equation*} 
    In this problem, $\mathcal{L}$ can not be directly transformed to $L=\phi(\mathcal{L})$ via the graph reduction, because there may not exist a hypergraph of which the reduced graph corresponds to $L$. However, it can be transformed to a semi-definite programming problem with inequality constraints by considering the graph reduction of each candidate hyperedge. The hypergraph ARCCC problem can be transformed as

    \begin{equation*}
    	 \max_{w_{\widehat{i}},s} s, 
        \begin{aligned}
                       {\rm subject\ to\ } 
                       \begin{cases}
                        0\leq w_{\widehat{i}}\leq 1 {\rm\ for\ } i_1<\cdots<i_{m+1},\\
            \sum_{i_1<\cdots<i_{m+1}} w_{\widehat{i}}L_{\widehat{i}}-s(I-\frac{1}{n}{\bf 1}{\bf 1}^\top)\succeq 0,\\
            \sum_{i_1<\cdots<i_{m+1}} w_{\widehat{i}} c_{\widehat{i}}\leq U,
            \end{cases}
        \end{aligned}
    \end{equation*}
    where $L_{\widehat{i}}=\phi(\mathcal{L}_{E_{\widehat{i}}})$. It is a convex problem and can be solved via the barrier function method.

\subsection{Directed Hypergraph}
    Similarly, we define the algebraic connectivity of the directed hypergraph.
    \begin{defi}
        Let ${\bf 1}$ be the all ones vector. The algebraic connectivity of an $(m+1)$-uniform directed hypergraph $\mathcal{G}$ is defined by quadratic form
        \begin{equation*}
            a(\mathcal{G})=\min_{x\perp {\bf 1},\Vert x\Vert_2=1} x^\top \mathcal{L} x.
        \end{equation*}
    \end{defi}
    
    The properties of the directed hypergraph are not so good as the undirected situation.
    The algebraic connectivity can be $0$ or negative for the strongly connected hypergraph. 
    We focus on the relationship between the algebraic connectivity and the vertex connectivity. The same as the undirected situation, we have $1_{\mathcal{C}}^\top \mathcal{L}1_{\mathcal{C}}^\top=0$ for a connected component $\mathcal{C}$ of the undirected base hypergraph of $\mathcal{G}$. Then we have such lemma.
    
    \begin{lm}
    \label{agneg}
        If a hypergraph $\mathcal{G}$ is not weak connected, we have
        \begin{equation*}
            a(\mathcal{G})\leq 0.
        \end{equation*}
        
    \end{lm}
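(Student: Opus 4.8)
The plan is to exhibit a single feasible vector in the minimization defining $a(\mathcal{G})$ whose quadratic form equals $0$; since $a(\mathcal{G})$ is the minimum over all such vectors, this forces $a(\mathcal{G})\le 0$. Because $\mathcal{G}$ is not weakly connected, its undirected base hypergraph splits into at least two connected components. I would fix one such component $\mathcal{C}$, so that both $\mathcal{C}$ and its complement $\mathcal{C}^{\rm c}=V\setminus\mathcal{C}$ are nonempty, and set $k=\vert\mathcal{C}\vert$. Following the classical graph argument, take the two-valued test vector
\[
  x = (n-k)\,{\bf 1}_{\mathcal{C}} - k\,{\bf 1}_{\mathcal{C}^{\rm c}},
\]
which satisfies $x\perp{\bf 1}$ since $(n-k)k-k(n-k)=0$; after dividing by $\Vert x\Vert_2$ it becomes admissible for the definition of $a(\mathcal{G})$.

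The key structural observation is that no hyperedge of $\mathcal{G}$ meets both $\mathcal{C}$ and $\mathcal{C}^{\rm c}$, precisely because $\mathcal{C}$ is a connected component of the base hypergraph. Writing $\mathcal{L}=\sum_{E\in\mathcal{E}}\mathcal{L}_E$ as in the proof of Theorem \ref{agpsd}, every $\mathcal{L}_E$ is supported entirely inside $\mathcal{C}$ or entirely inside $\mathcal{C}^{\rm c}$; equivalently, the matrix representation $\phi(\mathcal{L})$ is block diagonal with respect to the partition $V=\mathcal{C}\sqcup\mathcal{C}^{\rm c}$. Consequently the quadratic form carries no cross terms between the two blocks, so
\[
  x^\top \mathcal{L} x = (n-k)^2\,{\bf 1}_{\mathcal{C}}^\top \mathcal{L}\,{\bf 1}_{\mathcal{C}} + k^2\,{\bf 1}_{\mathcal{C}^{\rm c}}^\top \mathcal{L}\,{\bf 1}_{\mathcal{C}^{\rm c}}.
\]
Now I invoke the fact recorded just before the lemma, namely ${\bf 1}_{\mathcal{C}}^\top \mathcal{L}\,{\bf 1}_{\mathcal{C}}=0$ for a base component $\mathcal{C}$ (and likewise for $\mathcal{C}^{\rm c}$), which is itself inherited from ${\bf 1}^\top\mathcal{L}={\bf 0}$ applied within each diagonal block. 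Both summands vanish, hence $x^\top\mathcal{L}x=0$.

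Dividing $x$ by its norm preserves both $x\perp{\bf 1}$ and the value $x^\top\mathcal{L}x=0$, so the normalized vector is feasible and attains $0$; therefore the minimum satisfies $a(\mathcal{G})\le 0$, as claimed. The one point that genuinely needs care — and where the directed case differs from the undirected one — is the cross-term cancellation: since $\phi(\mathcal{L})$ need not be symmetric, I must verify that \emph{both} off-diagonal blocks ${\bf 1}_{\mathcal{C}}^\top\mathcal{L}\,{\bf 1}_{\mathcal{C}^{\rm c}}$ and ${\bf 1}_{\mathcal{C}^{\rm c}}^\top\mathcal{L}\,{\bf 1}_{\mathcal{C}}$ vanish, which they do precisely because no hyperedge straddles the two components. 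I do not expect any obstacle beyond this bookkeeping, since the within-component identity is supplied directly by the text.
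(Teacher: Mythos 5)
Your proof is correct and follows exactly the route the paper intends: it uses the fact stated immediately before the lemma, namely ${\bf 1}_{\mathcal{C}}^\top \mathcal{L}\,{\bf 1}_{\mathcal{C}}=0$ for a component $\mathcal{C}$ of the base hypergraph, together with the observation that no hyperedge straddles two components, to build a test vector orthogonal to ${\bf 1}$ with vanishing quadratic form. Your extra care with the two (possibly asymmetric) cross-terms and with $\mathcal{C}^{\rm c}$ being a union of components is exactly the bookkeeping the paper leaves implicit.
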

    
    The order-preserving property still holds.

    \begin{lm}
        If $\mathcal{G}_i=(V,\mathcal{E}_i)$ for $i=1,2$ and $\mathcal{E}_1\cap \mathcal{E}_2=\emptyset$, then $a(\mathcal{G}_1)+ a(\mathcal{G}_2)\leq a( \mathcal{G}_1\cup\mathcal{G}_2)$
    \end{lm}
    The proof is the same. The sparsity hypothesis is also necessary. $s$ is defined as 
    \begin{equation*}
        s=\max_{v_i,v_j}\vert\{E\in\mathcal{E}:E=(T,v_j),v_i\in T\}\vert.
    \end{equation*}
    
    \begin{lm}
         Let $\mathcal{G}$ be a hypergraph and $\mathcal{G}_1$ is derived by removing an arbitrary vertex and all its $s$ out hyperedges and all the in hyperedges, then 
        \begin{equation*}
            a(\mathcal{G}_1)\geq a(\mathcal{G})-\frac{3}{2}ms.
        \end{equation*}
    \end{lm}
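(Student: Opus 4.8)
The plan is to mirror the proof of Lemma \ref{MainLM}, replacing the symmetric bookkeeping of the undirected case by the directed single-hyperedge quadratic form. Assume without loss of generality that the removed vertex is $v_n$, let $\mathcal{L}_1$ be the Laplacian tensor of $\mathcal{G}_1$, and let $y\in\mathbb{R}^{n-1}$ be its Fiedler vector, so that $y\perp{\bf 1}$, $\|y\|_2=1$ and $a(\mathcal{G}_1)=y^\top\mathcal{L}_1 y$. Padding with a zero entry gives $\widetilde{y}=(y^\top,0)^\top\in\mathbb{R}^n$, which still satisfies $\widetilde{y}\perp{\bf 1}$ and $\|\widetilde{y}\|_2=1$, so the variational definition of $a(\mathcal{G})$ yields $a(\mathcal{G})\leq \widetilde{y}^\top\mathcal{L}\widetilde{y}$. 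Using additivity $\mathcal{L}=\sum_{E\in\mathcal{E}}\mathcal{L}_E$ together with the fact that every hyperedge not incident to $v_n$ evaluates identically on $\widetilde{y}$ and on $y$, I would split $\widetilde{y}^\top\mathcal{L}\widetilde{y}=a(\mathcal{G}_1)+\Delta$ with $\Delta=\sum_{E\text{ incident to }v_n}\widetilde{y}^\top\mathcal{L}_E\widetilde{y}$; it then suffices to show $\Delta\leq\frac32 ms$.

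Next I would compute the directed single-hyperedge form. For $E=(T,h)$ the diagonal (indegree) tensor puts mass $m$ only on the head $h$, while the adjacency part contributes $x_h\sum_{t\in T}x_t$, so $x^\top\mathcal{L}_E x=m x_h^2-x_h\sum_{t\in T}x_t=\sum_{t\in T}x_h(x_h-x_t)$. The crucial simplification is that $\widetilde{y}$ vanishes at $v_n$: for an in-hyperedge $E=(T,v_n)$ the head is $v_n$, so every term of its form carries the factor $\widetilde{y}_n=0$ and contributes nothing to $\Delta$. Hence $\Delta$ collapses to a sum over the out-hyperedges $\mathcal{E}_n^{\mathrm{out}}=\{E=(T,h):v_n\in T\}$, each with head $h\neq v_n$, and reads $\Delta=\sum_{E\in\mathcal{E}_n^{\mathrm{out}}}\bigl(m y_h^2-y_h\sum_{t\in T,\,t\neq n}y_t\bigr)$.

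For the quantitative bound I would argue hyperedge by hyperedge. Writing $S_E=\sum_{t\in T,\,t\neq n}y_t$, the weighted arithmetic--geometric inequality $|y_hS_E|\leq\frac m2 y_h^2+\frac1{2m}S_E^2$ gives $m y_h^2-y_hS_E\leq\frac{3m}{2}y_h^2+\frac1{2m}S_E^2$, and Cauchy--Schwarz bounds $S_E^2\leq(m-1)\sum_{t\in T,\,t\neq n}y_t^2$. Summing over $\mathcal{E}_n^{\mathrm{out}}$ and grouping the first (degree) terms by the head $h$, the directed sparsity controls $N_h:=|\{E=(T,h):v_n\in T\}|\leq s$, whence $\frac{3m}{2}\sum_{E}y_h^2=\frac{3m}{2}\sum_{h\neq n}N_h y_h^2\leq\frac{3m}{2}s\sum_{h\neq n}y_h^2\leq\frac32 ms$. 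Thus the head terms already produce exactly the claimed constant, and when $m=1$ the tail sum $S_E$ is empty, recovering the directed graph estimate with room to spare.

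The hard part will be the remaining tail contribution $\frac1{2m}\sum_E S_E^2\leq\frac12\sum_{E\in\mathcal{E}_n^{\mathrm{out}}}\sum_{t\in T,\,t\neq n}y_t^2=\frac12\sum_{t\neq n}M_t\,y_t^2$, where $M_t$ counts the out-hyperedges of $v_n$ whose tail also contains $v_t$. In contrast with the undirected case, the directed sparsity $s$ bounds only the co-occurrence of a tail vertex with a \emph{fixed} head, not the co-occurrence of two tail vertices across arbitrary heads, so $M_t$ is not immediately $\leq s$ and this term resists the naive estimate. I expect the decisive step to be a head-first reorganisation of this double sum: fixing $h$, the at most $s$ hyperedges with head $h$ through $v_n$ turn $\sum_t M_t y_t^2$ into a sum over heads of terms each controlled by $s$, i.e. into the quadratic form of the (indegree-based) Laplacian of the directed reduced graph on the produced $(\text{tail},\text{head})$ pairs, whose size must then be reabsorbed into the degree budget — for instance by rebalancing the arithmetic--geometric split so that the $\frac{3m}{2}$ head coefficient is lowered to leave room for the tail mass. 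Pinning down exactly how the asymmetric sparsity absorbs $\sum_t M_t y_t^2$ within the $\frac32 ms$ budget (or, failing that, isolating the minimal extra hypothesis under which it does) is the single genuine obstacle; everything else is the routine directed analogue of Lemma \ref{MainLM}.
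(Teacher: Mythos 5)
Your skeleton---padding the Fiedler vector of $\mathcal{G}_1$ with a zero entry, splitting $\widetilde{y}^\top\mathcal{L}\widetilde{y}=a(\mathcal{G}_1)+\Delta$, and observing that every in-hyperedge $E=(T,v_n)$ contributes nothing because each term of its quadratic form carries the factor $\widetilde{y}_n=0$---is exactly the paper's, and your formula $x^\top\mathcal{L}_E x=m x_h^2-x_h\sum_{t\in T}x_t$ matches the paper's. But the proposal stops short of a proof, and the two estimates you chose make the gap unclosable. Your weighted AM--GM $|y_h S_E|\leq\frac{m}{2}y_h^2+\frac{1}{2m}S_E^2$ is the wrong split: it inflates the head coefficient to $\frac{3m}{2}$, which by itself already consumes the entire $\frac{3}{2}ms$ budget, so the tail mass $\frac{1}{2m}\sum_E S_E^2$ can never be absorbed no matter how it is counted (even granting $M_t\leq s$ for your tail counts, your bound overshoots by $\frac{m-1}{2m}s$). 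The paper instead estimates pair by pair: for each tail vertex $t\neq n$ it uses $y_h^2-y_hy_t\leq\frac{3}{2}y_h^2+\frac{1}{2}y_t^2$, while the $t=n$ term contributes only $y_h^2$ (not $\frac{3}{2}y_h^2$) since $\widetilde{y}_n=0$. Per out-hyperedge this gives $\frac{3m-1}{2}y_h^2+\frac{1}{2}\sum_{t\in T,\,t\neq n}y_t^2$, i.e.\ head coefficient $\frac{3m-1}{2}$ with exactly $\frac{1}{2}$ of room left for the tails.

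The point you call ``the single genuine obstacle''---that the directed sparsity bounds only tail--head co-occurrence, so $M_t=|\{E=(T,h)\in\mathcal{E}:v_n,v_t\in T\}|$ is not obviously $\leq s$---is resolved in the paper not by a clever reorganisation of the double sum but by the hypothesis of the lemma itself: the statement removes the vertex ``and all its $s$ out hyperedges,'' i.e.\ the removed vertex has at most $s$ out-hyperedges in total, whence trivially $M_t\leq|\{E=(T,h):v_n\in T\}|\leq s$, alongside the pairwise bound $N_h\leq s$ you already have. With both counts bounded by $s$, the paper's per-pair split closes exactly:
\begin{equation*}
\Delta\leq\sum_{i=1}^{n-1}y_i^2\left(\frac{3m-1}{2}N_i+\frac{1}{2}M_i\right)\leq\frac{3m}{2}s\sum_{i=1}^{n-1}y_i^2=\frac{3}{2}ms .
\end{equation*}
So two ingredients are missing from your attempt: the reading of the hypothesis that kills the tail--tail counting problem, and the finer per-pair AM--GM split that makes the constants fit; the ``rebalancing'' you gesture at in your last sentence is not optional but is precisely the paper's argument.
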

    \begin{proof}
        The main idea is the same like the proof of Lemma \ref{MainLM}. Let $\mathcal{L}_1$ be the Laplacian tensor of $\mathcal{G}_1$ and 
        \begin{equation*}
            y=\arg \min_{x\perp {\bf 1},\Vert x\Vert_2=1} x^\top \mathcal{L} x.
        \end{equation*}
        
        
        The same as the proof of Lemma \ref{MainLM}, we have

        \begin{equation}
             (y^\top\ 0)\mathcal{L}(y^\top\ 0)^\top=a(\mathcal{G}_1)+\sum_{E=(T,v_n)\in\mathcal{E}}  (y^\top\ 0)\mathcal{L}_E (y^\top\ 0)^\top+\sum_{E=(T,v_i)\in\mathcal{E}\atop v_n\in T} (y^\top\ 0)\mathcal{L}_E (y^\top\ 0)^\top.
             \label{eqD1}
        \end{equation}
        For a directed hyperedge $E=(\{v_{i_1},\dots,v_{i_m}\},v_{i_{m+1}})$, the quadratic form is
        \begin{equation*}
            x^\top \mathcal{L}_E x=\sum_{j=1}^m x_{i_{m+1}}^2-x_{i_{m+1}}x_{i_j}.
        \end{equation*}
        Therefore the first summation term in \eqref{eqD1} is $0$. For the second summation term, we have
        \begin{equation*}
        \begin{aligned}
            &\sum_{E=(T,v_i)\in\mathcal{E}\atop v_n\in T} (y^\top\ 0)\mathcal{L}_E (y^\top\ 0)^\top.
            =\sum_{E=(T,v_i)\in\mathcal{E}\atop v_n\in T} \sum_{v_j\in T} y_i^2-y_i y_j
            \leq \sum_{E=(T,v_i)\in\mathcal{E}\atop v_n\in T}\left(y_i^2+ \sum_{v_j\in T\atop j\neq n} y_i^2+\frac{1}{2}(y_i^2 y_j^2)\right)\\
            =&\sum_{i=1}^{n-1} y_i^2 \left(\frac{3m-1}{2}\vert\{E\in\mathcal{E}:E=(T,v_i)\}\vert +\frac{1}{2}\vert \{E=(T,v)\in\mathcal{E}: v_i\in T\}\vert\right)\\
            \leq &\sum_{i=1}^{n-1} \frac{3ms}{2}y_i^2=\frac{3}{2}ms
        \end{aligned}
        \end{equation*}
        
        In total, we prove
        \begin{equation*}
            a(\mathcal{G})\leq (y^\top\ 0)\mathcal{L}(y^\top\ 0)^\top\leq a(\mathcal{G}_1)+ \frac{3}{2}ms.
        \end{equation*}
    \end{proof}
    
    By induction we have a direct corollary.
    \begin{cor}
        Let $\mathcal{G}$ be a directed hypergraph with sparsity $s$. Let $\mathcal{H}$ be the hypergraph that removes $k$ vertices and all their incident hyperedges from $\mathcal{G}$, then 
        \begin{equation*}
            a(\mathcal{H})\geq a(\mathcal{G})-\frac{3}{2}msk.
        \end{equation*}
        
    \end{cor}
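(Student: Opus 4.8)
The plan is to argue by induction on $k$, peeling off one vertex at a time and invoking the preceding single-vertex lemma at each step. First I would set $\mathcal{G}_0=\mathcal{G}$ and, for $j=1,\dots,k$, let $\mathcal{G}_j$ be the directed hypergraph obtained from $\mathcal{G}_{j-1}$ by deleting one of the $k$ chosen vertices together with all of its incident in- and out-hyperedges, so that $\mathcal{G}_k=\mathcal{H}$. The base case $k=0$ is the trivial identity $a(\mathcal{G})\geq a(\mathcal{G})$, and the induction hypothesis at stage $j-1$ will read $a(\mathcal{G}_{j-1})\geq a(\mathcal{G})-\frac{3}{2}ms(j-1)$.

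The one observation that makes the induction go through is that the sparsity is monotone non-increasing under the removal of vertices and their incident hyperedges. Since $s=\max_{v_i,v_j}\vert\{E\in\mathcal{E}:E=(T,v_j),\,v_i\in T\}\vert$ is a maximum over vertex pairs of a count of hyperedges, and each deletion step only removes hyperedges from the edge set, the sparsity $s_{j-1}$ of every intermediate hypergraph $\mathcal{G}_{j-1}$ satisfies $s_{j-1}\leq s$. Consequently, applying the single-vertex lemma to the deletion carrying $\mathcal{G}_{j-1}$ to $\mathcal{G}_j$ yields
\begin{equation*}
    a(\mathcal{G}_j)\geq a(\mathcal{G}_{j-1})-\frac{3}{2}m s_{j-1}\geq a(\mathcal{G}_{j-1})-\frac{3}{2}m s.
\end{equation*}

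Finally I would telescope: chaining these $k$ inequalities (equivalently, combining the inductive hypothesis with the step bound) gives
\begin{equation*}
    a(\mathcal{H})=a(\mathcal{G}_k)\geq a(\mathcal{G}_0)-\frac{3}{2}m s k=a(\mathcal{G})-\frac{3}{2}m s k,
\end{equation*}
which is the assertion. I expect no genuine obstacle; the only point deserving a moment's care is the monotonicity of $s$, which guarantees that the uniform per-step loss $\frac{3}{2}ms$ (rather than a fluctuating stage-dependent sparsity) can be reused at each deletion. This argument is the exact directed analogue of the passage from the undirected single-vertex Lemma~\ref{MainLM} to Corollary~\ref{corCK}.
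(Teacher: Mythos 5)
Your proof is correct and follows exactly the route the paper takes: the paper states this corollary as a direct consequence of the preceding single-vertex lemma "by induction," which is precisely your telescoping argument. Your explicit observation that the sparsity $s$ is non-increasing under vertex deletion — so the uniform per-step loss $\frac{3}{2}ms$ remains valid at every stage — is left implicit in the paper but is the right point to flag.
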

    
    This can derive a lemma in symmetric form.
    \begin{lm}
    \label{lmsymd}
        Let $\mathcal{G}_i=(V_i,\mathcal{E}_i),\ i=1,2$ be a partition of the hypergraph $\mathcal{G}$, then
        \begin{equation*}
            a(\mathcal{G})\leq a(\mathcal{G}_1)+\frac{3}{2}ms\vert V_2\vert,a(\mathcal{G}_2)+\frac{3}{2}ms\vert V_1\vert).
        \end{equation*}
        
    \end{lm}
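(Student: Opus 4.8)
The plan is to reduce the statement to the corollary that immediately precedes it, exactly as Lemma \ref{lmsym} is obtained from Corollary \ref{corCK} in the undirected setting. First, though, I would fix the reading of the displayed inequality: as written it is missing its opening $\min$, and the intended meaning is
\begin{equation*}
a(\mathcal{G})\leq\min\left\{a(\mathcal{G}_1)+\tfrac{3}{2}ms\vert V_2\vert,\; a(\mathcal{G}_2)+\tfrac{3}{2}ms\vert V_1\vert\right\}.
\end{equation*}
It therefore suffices to prove each of the two one-sided bounds and then take the smaller one.

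Next I would make the partition explicit. Since $\{V_1,V_2\}$ is a partition of the vertex set $V$, the sub-hypergraph $\mathcal{G}_1=(V_1,\mathcal{E}_1)$ is precisely the hypergraph induced on $V\setminus V_2$: deleting the $\vert V_2\vert$ vertices of $V_2$ together with all their in- and out-hyperedges removes exactly those directed hyperedges meeting $V_2$ and leaves the hyperedges contained in $V_1$, which is $\mathcal{E}_1$. Hence $\mathcal{G}_1$ is obtained from $\mathcal{G}$ by deleting $k=\vert V_2\vert$ vertices and their incident hyperedges, and symmetrically $\mathcal{G}_2$ is obtained by deleting the $\vert V_1\vert$ vertices of $V_1$. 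This identification $\mathcal{G}_1=\mathcal{G}(V\setminus V_2)$ is the crux that lets the corollary apply verbatim.

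Now I would invoke the preceding corollary for the directed hypergraph (the inductive consequence of the per-vertex loss bound $\tfrac{3}{2}ms$). Applying it with $k=\vert V_2\vert$ gives $a(\mathcal{G}_1)\geq a(\mathcal{G})-\tfrac{3}{2}ms\vert V_2\vert$, that is $a(\mathcal{G})\leq a(\mathcal{G}_1)+\tfrac{3}{2}ms\vert V_2\vert$; applying it with $k=\vert V_1\vert$ yields the symmetric bound $a(\mathcal{G})\leq a(\mathcal{G}_2)+\tfrac{3}{2}ms\vert V_1\vert$. Taking the minimum of the two right-hand sides is exactly the asserted inequality.

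The only point that needs care — and the one I would flag as the main (if minor) obstacle — is that the corollary's loss constant is expressed through the sparsity $s$ of the hypergraph being shrunk, whereas the induction deletes vertices one at a time. All intermediate hypergraphs arising in that deletion are sub-hypergraphs of $\mathcal{G}$, so their sparsity never exceeds $s$; this monotonicity of sparsity under vertex deletion is what makes the per-step loss $\tfrac{3}{2}ms$ uniform and justifies applying the corollary with the global sparsity of the original $\mathcal{G}$. I would state this monotonicity explicitly so that the passage from the per-vertex lemma to the symmetric form is fully rigorous.
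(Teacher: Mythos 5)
Your proof is correct and follows essentially the same route as the paper: the paper obtains this lemma directly from the preceding corollary (the inductive $\frac{3}{2}msk$ bound for deleting $k$ vertices), applied once with $k=\vert V_2\vert$ and once with $k=\vert V_1\vert$, exactly as you do. Your two added clarifications — reading the right-hand side as a minimum (fixing the statement's typo) and noting that sparsity is monotone under vertex deletion so the per-step constant $\frac{3}{2}ms$ stays valid throughout the induction — are points the paper leaves implicit, and both are sound.
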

    
    From Lemma \ref{lmsymd} and \ref{agneg} we can construct the relationship between the algebraic connectivity and the vertex connectivity.
    \begin{thm}
        Suppose a hypergraph $\mathcal{G}$ has sparsity $s$. Then
        \begin{equation*}
            a(\mathcal{G})\leq \frac{3}{2}ms v(\mathcal{G}).
        \end{equation*}
    \end{thm}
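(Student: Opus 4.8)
The plan is to mirror Fiedler's classical vertex-connectivity argument for graphs, combining the partition bound of Lemma~\ref{lmsymd} with the sign information of Lemma~\ref{agneg}. The entire theorem reduces to choosing the partition of $V$ induced by a minimum vertex cutset.

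First I would pick a cutset $\widehat{V}\subseteq V$ realizing the vertex connectivity, so that $|\widehat{V}|=v(\mathcal{G})$ and the induced hypergraph $\mathcal{G}(V\setminus\widehat{V})$ is no longer weakly connected. Then I set $V_1=V\setminus\widehat{V}$ and $V_2=\widehat{V}$, which is a genuine partition of the vertex set, and write $\mathcal{G}_i=\mathcal{G}(V_i)$ for the two induced sub-hypergraphs. By the very definition of a cutset, $\mathcal{G}_1$ is not weakly connected, so Lemma~\ref{agneg} applies and gives $a(\mathcal{G}_1)\leq 0$.

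With this partition fixed, Lemma~\ref{lmsymd} yields
\begin{equation*}
    a(\mathcal{G})\leq a(\mathcal{G}_1)+\frac{3}{2}ms\,|V_2|.
\end{equation*}
Substituting $a(\mathcal{G}_1)\leq 0$ together with $|V_2|=|\widehat{V}|=v(\mathcal{G})$ immediately produces $a(\mathcal{G})\leq \frac{3}{2}ms\,v(\mathcal{G})$. This is precisely the directed analogue of the undirected theorem, with the constant $(2m-1)s$ replaced by $\frac{3}{2}ms$ and the equality $a(\mathcal{G}_1)=0$ of Theorem~\ref{agpsd} relaxed to the inequality $a(\mathcal{G}_1)\leq 0$ of Lemma~\ref{agneg}.

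Since the argument is just an assembly of two established lemmas, there is no real computational obstacle; the care needed is purely in the book-keeping. I would verify that the minimum cutset produces nonempty $V_1$ and $V_2$ so that the partition hypothesis of Lemma~\ref{lmsymd} is legitimately met, and that ``disconnected after deleting $\widehat{V}$'' is read as ``not weakly connected,'' which is exactly the hypothesis activating Lemma~\ref{agneg} in the directed setting. Because the directed algebraic connectivity may be negative, it is essential to use $a(\mathcal{G}_1)\leq 0$ rather than an equality, and I would note that the resulting estimate bounds $a(\mathcal{G})$ only from above.
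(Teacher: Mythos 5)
Your proposal is correct and follows exactly the route the paper intends: the paper derives this theorem by combining Lemma~\ref{lmsymd} (the partition bound) with Lemma~\ref{agneg} (nonpositivity of $a(\cdot)$ for non-weakly-connected hypergraphs), applied to the partition $V_1=V\setminus\widehat{V}$, $V_2=\widehat{V}$ induced by a minimum cutset. Your additional care about nonemptiness of the partition and reading ``disconnected'' as ``not weakly connected'' only makes explicit what the paper leaves implicit.
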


\section{Numeric Examples}
In this section, we test some examples to verify our theory.
\subsection{Structured hypergraph}
    We apply our structure firstly on the structured hypergraph. We compute the algebraic connectivity of the hyperring, the complete hypergraph, the star hypergraph and a multi-star hypergraph with some additional structures. 

    A $3$-uniform hyperring $\mathcal{G}=(V,\mathcal{E})$ has vertex set $\{v_i:i\in [n]\}$ and hyperedge set
    \begin{equation*}
        \mathcal{E}=\{\{v_i,v_{(i+1)//n},v_{(i+2)//n}\}:i\in [n]\},
    \end{equation*}
    where $c=a//b$ indicates $0\leq c<b$ and $a\equiv c({\rm mod}\ b)$ for $a,b,c\in\mathbb{Z}$.
    The algebraic connectivity $a(\mathcal{G})$ of a hyperring is shown in Table \ref{TabCn}.
    \begin{table}[H]
        \centerline{
        \begin{tabular}{|c|c|c|c|c|c|c|c|}
            \hline
            
            $n$ & 6&7&8&9&10&11&12 \\
            \hline
            $a(\mathcal{G})$ & 5.0 & 3.952 & 3.172 & 2.589 & 2.146 & 1.804&1.536\\
            \hline
        \end{tabular}
        }
        \caption{The algebraic connectivity of the $3$-uniform $n$-dimensional hyperring.}
        \label{TabCn}
    \end{table}

    The sparsity of a $3$-uniform hyperring is always $2$. The same as the algebraic connectivity of a ring graph, the algebraic connectivity goes down when $n$ becomes larger.

    The result for the complete $3-$uniform hypergraph with $n$ vertices is shown in Table \ref{TabKn}.
    \begin{table}[H]
    \centerline{
    \begin{tabular}{|c|c|c|c|c|c|c|c|}
        \hline
        $n$ & 6&7&8&9&10&11&12 \\
        \hline
        $a(\mathcal{G})$ & 24 & 35 & 48 & 63 & 80 & 99&120\\
        \hline
    \end{tabular}
    }
        \caption{The algebraic connectivity of the $3$-uniform $n$-dimensional complete hypergraph.}
        \label{TabKn}
    \end{table}

    The vertex connectivity for a complete $3$-uniform hypergraph with $n$ vertices is $(n-2)$ and the sparsity is $(n-2)$. We then move onto the star hypergraph. 
    The hyperedge set of a complete $3$-uniform star hypergraph with $(n+1)$ vertices is defined as
    \begin{equation*}
        \mathcal{E}=\{\{v_i,v_j,v_{n+1}\}:1\leq i<j\leq n\}.
    \end{equation*}
    Its algebraic connectivity $a(\mathcal{G})$ is shown in Table \ref{TabSn}.
    \begin{table}[H]
       \centerline{
        \begin{tabular}{|c|c|c|c|c|c|c|c|}
            \hline
            
            $n$ & 6&7&8&9&10&11&12 \\
            \hline
            $a(\mathcal{G})$ & 11.0 & 13.0 & 15.0 & 17.0 & 19.0 & 21.0&23.0\\
            \hline
        \end{tabular}
        }
        \caption{The algebraic connectivity of the complete $3$-uniform star hypergraph.}
        \label{TabSn}
    \end{table}

    The sparsity is $(n-1)$.
    The complete star hypergraphs shows great linear increase of the algebraic connectivity. As the vertex connectivity is $1$, it is actually the linear relationship between the algebraic connectivity and the sparsity. 
    However, the bound is still not close. We further apply it to a multi-star hypergraph with some additional structures. Let a $3$-uniform hypergraph $\mathcal{G}=(V,\mathcal{E})$ with $V=\{v_i:i\in[2n+1]\}$. The star part of the hyperedge set is
    \begin{equation*}
        \mathcal{E}_1=\{\{v_i,v_j,v_{2n+1}\}: 1\leq i\leq n, n+1\leq j\leq 2n\}.
    \end{equation*}
    The additional structures are 
    \begin{equation}
    \begin{aligned}
        \mathcal{E}_{21}=\{\{v_i,v_{(i+1)//n},v_{(i+2//n)}\}: 1\leq i\leq n\}\cup
        \{\{v_{n+i},v_{n+(i+1)//n},v_{n+(i+2)//n}\}: 1\leq i\leq n\},\\
        \mathcal{E}_{22}=\{\{v_i,v_{(i+2)//n},v_{(i+4//n)}\}: 1\leq i\leq n\}\cup
        \{\{v_{n+i},v_{n+(i+2)//n},v_{n+(i+4)//n}\}: 1\leq i\leq n\},\\
    \end{aligned}
    \label{ads}
    \end{equation}
    and $\mathcal{E}$ satisfies $\mathcal{E}=\mathcal{E}_1\cup\mathcal{E}_{21}\cup \mathcal{E}_{22}$. The algebraic connectivity $a(\mathcal{G})$ is shown in Table \ref{TabSan}.
    \begin{table}[H]
        \centerline{
        \begin{tabular}{|c|c|c|c|c|c|c|c|}
            \hline    
            $n$ &7&8&9&10&11&12&13 \\
            \hline
            $a(\mathcal{G})$ & 21.0 & 21.515 & 27.0 & 24.608 & 29.452 & 27.917 &31.759\\
            \hline
            $(2m-1)s v(\mathcal{G})$ & 21 & 24 & 27.0 & 30 & 33 & 36 &39\\
            \hline
        \end{tabular}
        }
        \caption{The algebraic connectivity of the $3$-uniform star hypergraph with an additional structure.}
        \label{TabSan}
    \end{table}
    The vertex connectivity $v(\mathcal{G})=1$, because $v_{2n+1}$ is the only vertex that connects the two parts.
    The sparsity is $n$, generated by $v_i$ and $v_{2n+1}$ for arbitrary $i\in[2n]$.
    It is a very close bound. For $n=7,9$, this bound is tight. 
    We then copy the structure related to $v_{2n+1}$, letting $V=\{v_i: i\in[2n+2]\}$ and for $j=1,2$
    \begin{equation*}
        \mathcal{E}_{1j}=\{\{v_i,v_j,v_{2n+j}\}: 1\leq i\leq n, n+1\leq j\leq 2n\},
    \end{equation*}
    and $\mathcal{E}_1=\cup_{j=1}^k \mathcal{E}_{1j}$. The additional structure \eqref{ads} are retained. Then the vertex connectivity is $2$. The algebraic connectivity is shown in Table \ref{TabSakn}.
    \begin{table}[H]
        \centerline{
        \begin{tabular}{|c|c|c|c|c|c|c|c|}
            \hline    
            $n$ &7&8&9&10&11&12&13 \\
            \hline
            $a(\mathcal{G})$ & 40.308 & 37.515 & 45.773 & 44.608 & 51.452 & 51.917 &57.759\\
            \hline
            $(2m-1)s v(\mathcal{G})$ & 42 & 48 & 54 & 60 & 66 & 72 &84\\
            \hline
        \end{tabular}
        }
        \caption{The algebraic connectivity of the $3$-uniform multi-star hypergraph with an additional structure.}
        \label{TabSakn}
    \end{table}
    It shows that the increase of the vertex connectivity will leads to almost linear increase of the algebraic connectivity.

\subsection{Hyperedge Rewiring and the hypergraph ARCCC Problem}

     We perform our algorithm on the Contact-Primary-School dataset and the Email-Enron dataset, which belong to the ScHoLP datasets \cite{benson2018simplicial}. These datasets contain hyperedges of different sizes, and we only use the hyperedges that contain $3$ vertices to construct a $3$-uniform hypergraph. Then we perform a hyperedge rewiring method.

    \begin{figure}[H]
        \centering\includegraphics[width=8cm]{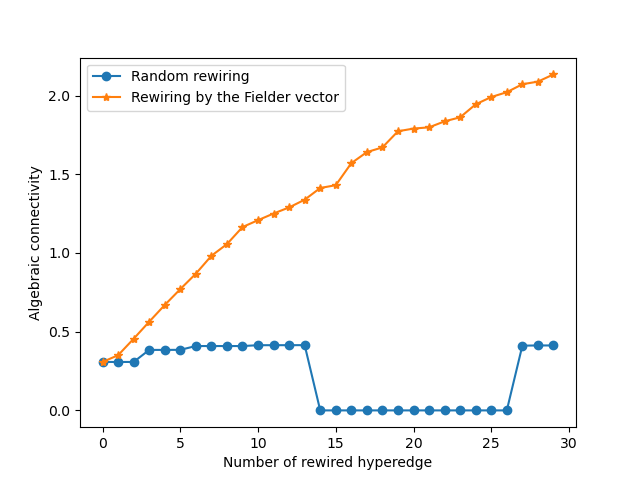}
        \centering\includegraphics[width=8cm]{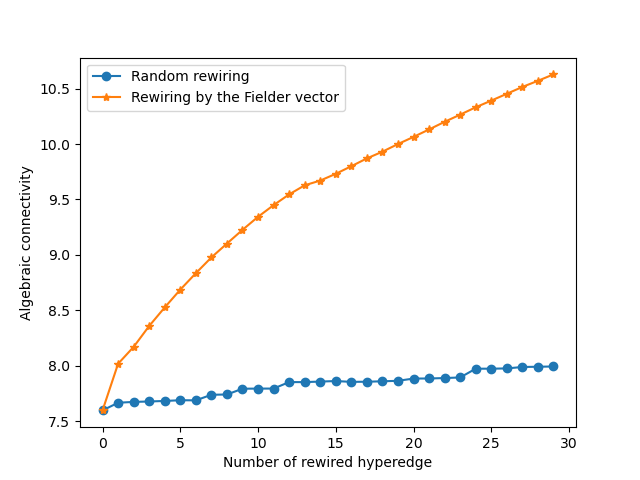}
        \caption{Results on the hyperedge rewiring problem.}       
    \label{Rewiring}  
    \end{figure}
    The left-hand side is the result of the Contact-Primary-School dataset and the right side is on the Email-Enron dataset. The random rewiring may decrease the algebraic connectivity, as shown in the first case. If the only hyperedge between two parts is removed, the algebraic connectivity will decrease to $0$ until a new hyperedge connecting these two parts is established. In comparison, our algorithm improve the algebraic connectivity efficiently. 


    For the hypergraph ARCCC problem, we firstly generate the weights in interval $[0,1]$ and the cost for each hyperedge for the Email-Enron dataset.
    In the dataset, each hyperedge repeat several times, leading to a primal weight $\Bar{w}_{\widehat{i}}\in\mathbb{N}$. The adjusted weight $w_{\widehat{i}}$ and cost $c_{\widehat{i}}$ are based on this. The more a hyperedge is repeated, the higher it weighs and the less it costs. To be specific, we normalized it weight by
    \begin{equation*}
        w_{\widehat{i}}=\sqrt[3]{\frac{\Bar{w}_{\widehat{i}}}{\max_{\widehat{i}}\{\Bar{w}_{\widehat{i}}\}}}.
    \end{equation*}
    Its cost is computed as
    \begin{equation*}
        c_{\widehat{i}}=\sqrt{\frac{\max_{\widehat{i}}\{\Bar{w}_{\widehat{i}}\}}{\Bar{w}_{\widehat{i}}}}.
    \end{equation*}
    We use the square root and the cubic root to avoid a constant $w_{\widehat{i}}c_{\widehat{i}}$. If both are square root, we have $w_{\widehat{i}}c_{\widehat{i}}=1$ for all $\widehat{i}$. We use the cubic root on the weight so that $w_{\widehat{i}}c_{\widehat{i}}>1$.
    For $E_{\widehat{i}}\notin \mathcal{E}$, we set $c_{\widehat{i}}=2\max_{\widehat{i}}\{w_{\widehat{i}}\}$.
    The result is shown in Figure \ref{HARCCC1}.
    \begin{figure}[H]
        \centering\includegraphics[width=8cm]{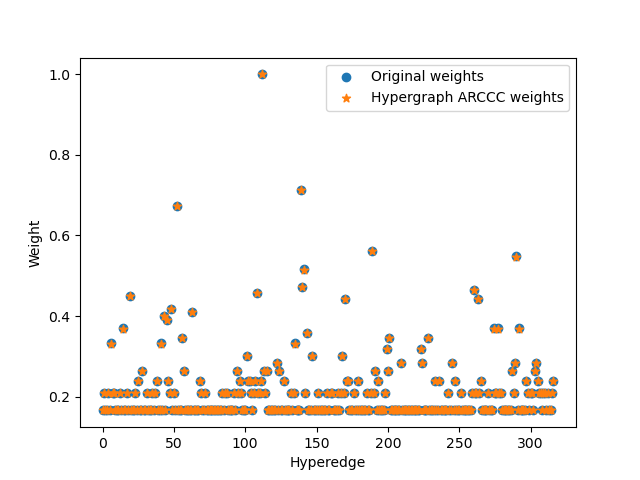}
        \centering\includegraphics[width=8cm]{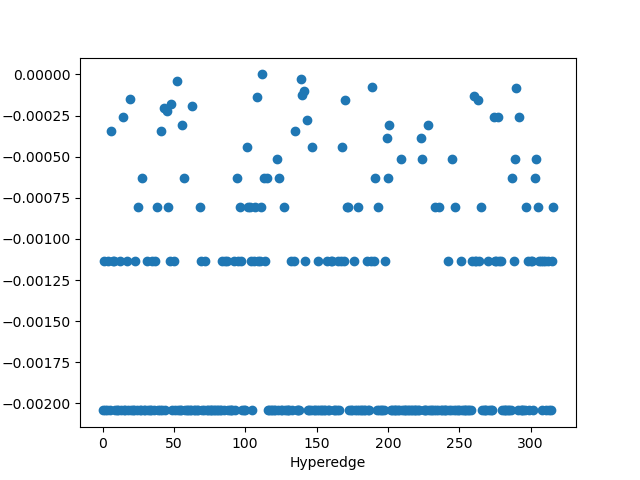}
        \caption{Results of the hypergraph ARCCC Problem on the Email-Enron dataset.}       
    \label{HARCCC1}  
    \end{figure}
    For the left-hand  side, the cycle is the $w_{\widehat{i}}$, where $E_{\widehat{i}}\notin\mathcal{E}$ is omitted. The star is the solution of the hypergraph ARCCC problem after the following modification process. We omit the value that is less than $10^{-4}$, and the remains are greater than $10^{-2}$. There is no value located in this interval, which yields a sharp gap naturally. We normalized the weight by a linear map so that the maximum is $1$. It is shown that the set of the remaining hyperedges is just $\mathcal{E}$, and the weights is quite near with the real weights. The right side is the relative error.
    We also make an example on the Contact-Primary-School dataset. 
    \begin{figure}[H]
        \centering\includegraphics[width=8cm]{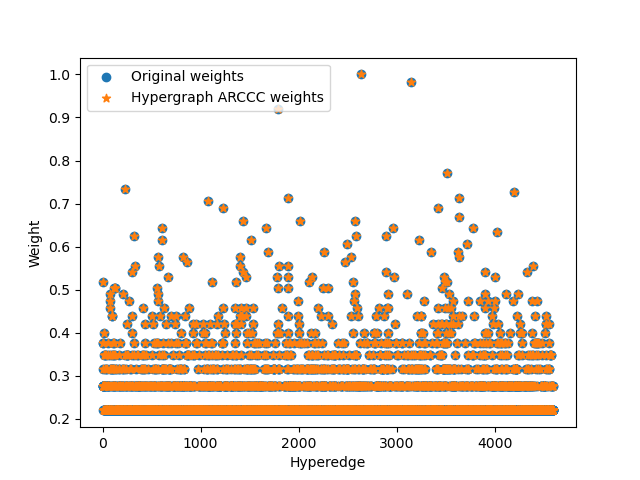}
        \centering\includegraphics[width=8cm]{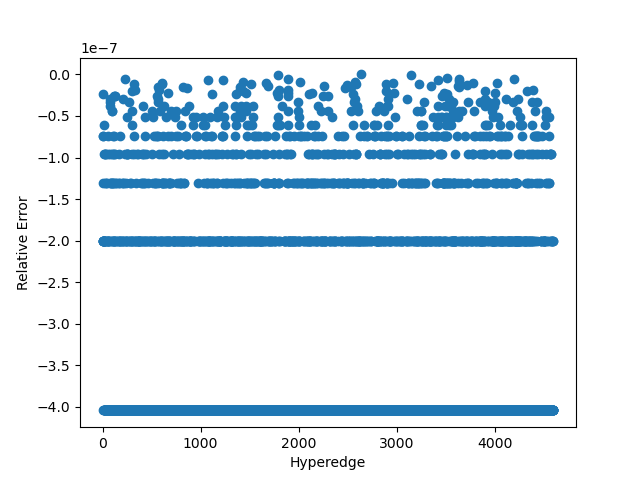}
        \caption{Results of the hypergraph ARCCC Problem on the Contact-Primary-School dataset.}       
    \label{HARCCC2}  
    \end{figure}
    The process of the following modification on the solution of the hypergraph ARCCC problem is the almost the same. The only different thing is that the gap is from $10^{-3}$ to $10^{-1}$.
    It can be shown that the hypergraph ARCCC problem can capture the importance of each hyperedge efficiently.

\section{Application in Dimensionality Reduction: Hypergraph Laplacian Eigenmap}
    The Laplacian eigenmap algorithm \cite{2001Laplacian,2003Laplacian} can be generalized to the hypergraph Laplacian eigenmap algorithm in our tensor product structure. The eigenmap step can also serve as a data preprocess step if the following clustering or classification algorithms use the linear distance rather than a kernel distance. 
    
    \begin{algorithm}[h]
    \caption{Hypergraph Laplacian Eigenmap} 
    \hspace*{0.02in} {\bf Input:} 
    Dataset $\{x_1,\dots,x_n\}\in\mathbb{R}^{n\times p}$.\\
    \hspace*{0.02in} {\bf Output:} 
    the dimension reduced dataset: $\{y_1,\dots,y_n\}\in\mathbb{R}^{n\times q}$, $q\ll p$.
    \begin{algorithmic}[1]
    \State Compute $\Vert x_i-x_j\Vert$ for all $i\neq j$.
    \State Find the $m$ nearest neighbors $n_{i1},\dots,n_{im}$ of $x_i$ for $i=1,\dots,n$.
    \State Construct an $(m+1)$-uniform hypergraph with $\mathcal{E}=\left\{E_i=\{n_{i1},\dots,n_{im},x_i\}|i\in[n]\right\}$.
    \State Construct the Laplacian tensor and compute its eigenvalue and eigenvector.
    \State Let $\{v_i\}_{i=1}^q$ be the eigenvectors corresponding to the $q$ smallest non-zero eigenvalues, then  $\{y_1,\dots,y_n\}=\{v_1,\dots,y_q\}^\top$.
    \State \Return $\{y_1,\dots,y_n\}$.
    \end{algorithmic}
    \end{algorithm}


    We simply use the Laplacian tensor to compute the eigenvector rather than the normalized Laplacian matrix in the classic graph Laplacian eigenmap algorithm in \cite{2001Laplacian,2003Laplacian}. We applied the original Laplacian eigenmap algorithm and our hypergraph Laplacian eigenmap algorithm and then perform a standard $K$-means clustering algorithm after the dimensionality reduction. The performance is further measured by the normalized mutual information and the adjusted Rand score. Both the $K$-means algorithm and the two measures are from the package Scikit-learn in python. Before dimensionality reduction, a normalization step for each attribute of the dataset is performed. All the datasets are from KEEL dataset repository.
    The result is shown below. The Breast Cancer Wisconsin (Diagnostic) dataset has 33 attributes, we choose $m=7$ nearest neighbors of each vertex, leading to an $8$-uniform hypergraph. In the graph Laplacian eigenmap, this means each vertex is connected to $7$ nearest vertices. We use the $0-1$ weight rather than the heat kernel weight to avoid the choice of the heat hyperparameter $t$ in \cite{2001Laplacian,2003Laplacian}. 
    
    We use HLE as the abbreviation for the hypergraph Laplacian eigenmap, and GLE for the graph Laplacian eigenmap. The horizontal coordinate is the reduced dimension. 

    \begin{figure}[H]
        \centering\includegraphics[width=8cm]{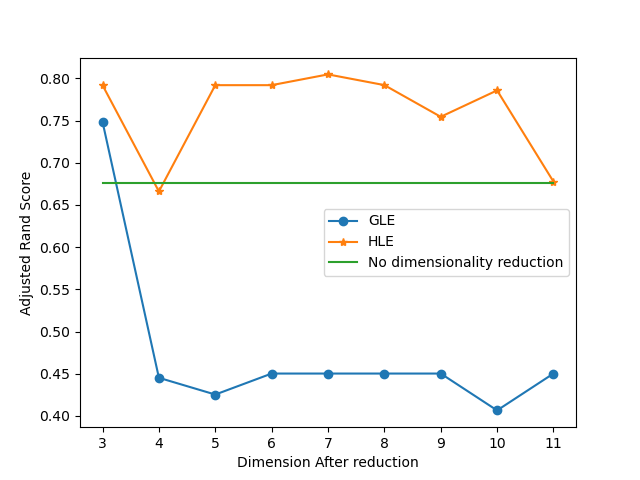}
        \centering\includegraphics[width=8cm]{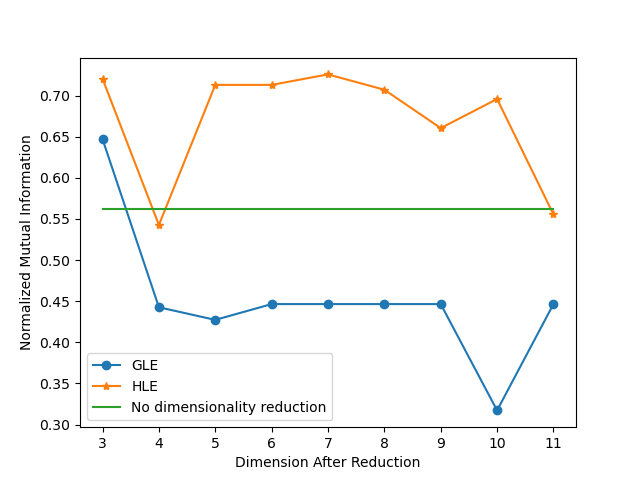}
        \caption{Results on the Breast Cancer Wisconsin (Diagnostic) dataset.}       
    \label{fig1}  
    \end{figure} 

    The green horizontal line in the figures is the result with no dimensionality reduction. We simply perform the $K$-means algorithm on the normalized datasets and use this result as a baseline. Our hypergraph Laplacian eigenmap algorithm has better performance and better stability than the classic Laplacian eigenmap algorithm. The same result happens on the Dermatology dataset and the Movement of Libras dataset, which are shown in Figure \ref{fig2} and \ref{fig3}. 
    \begin{figure}[H]
        \centering\includegraphics[width=8cm]{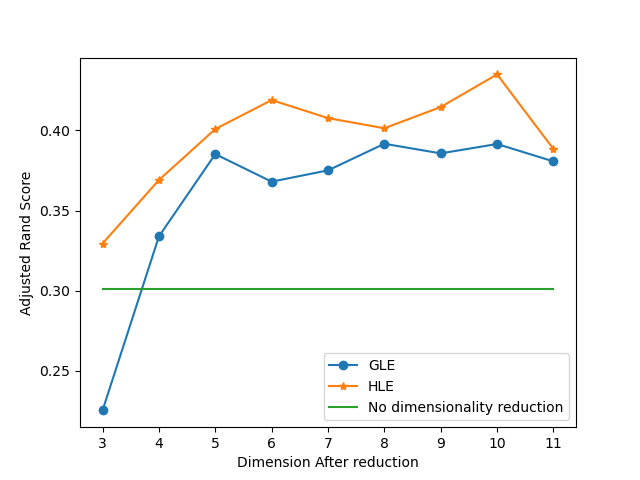}
        \centering\includegraphics[width=8cm]{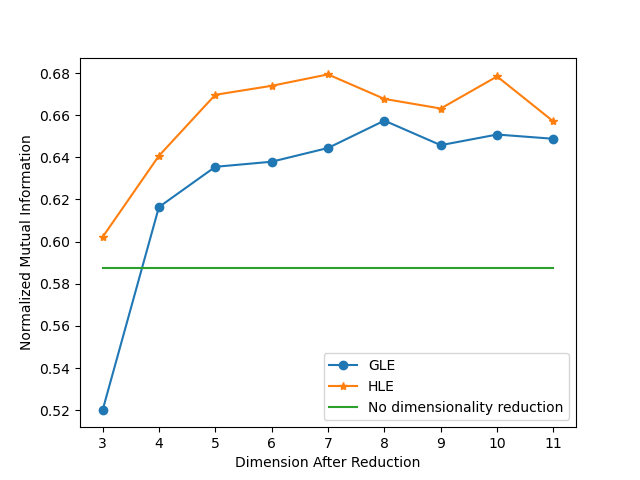}
        \caption{Results on the Movement of Libras dataset.}       
    \label{fig2}  
    \end{figure}
    \begin{figure}[H]
        \centering\includegraphics[width=8cm]{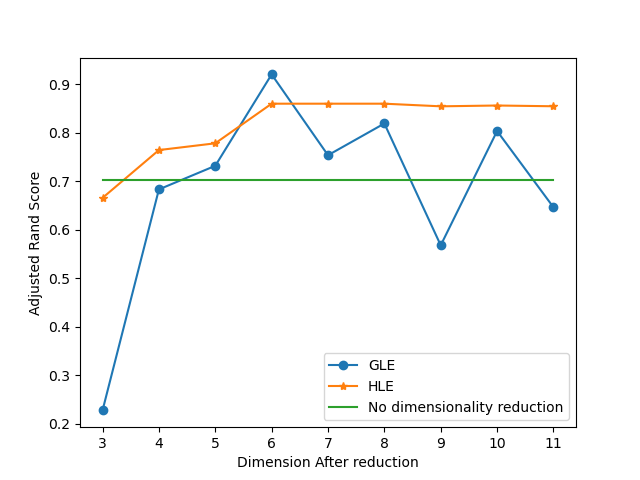}
        \centering\includegraphics[width=8cm]{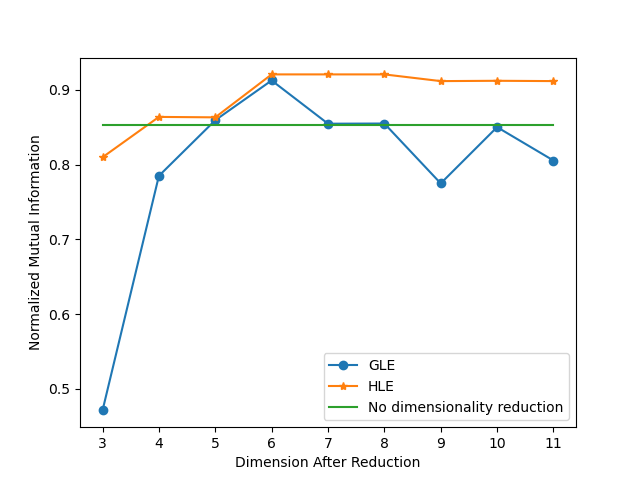}
        \caption{Results on the Dermatology dataset.}       
    \label{fig3}  
    \end{figure}

    In the third section, we mentioned that our structure is the same as the clique reduction technique in some cases. This means each hyperedge contains more edges. A hyperedge containing $(m+1)$ vertices actually contains $\frac{1}{2}m(m+1)$ edges in the view of the clique reduction.  On the other hand, in our algorithm we remove the normalized item $x^\top Dx=I$. To show that the improvement is neither due to the removal of the normalized term, nor due to the increase of edges, there is a detailed comparison. 

    To figure out the influence of increasing edges, we firstly increase the number of nearest neighbors in the graph Laplacian eigenmap algorithm to $\frac{1}{2}m(m+1)=28$.
    \begin{figure}[H]
        \centering\includegraphics[width=8cm]{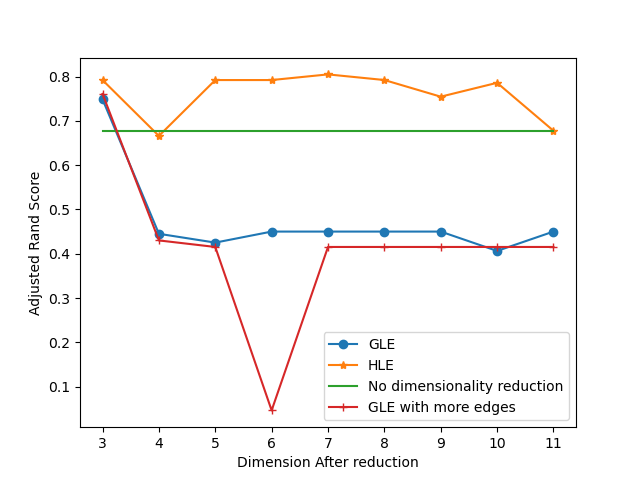}
        \centering\includegraphics[width=8cm]{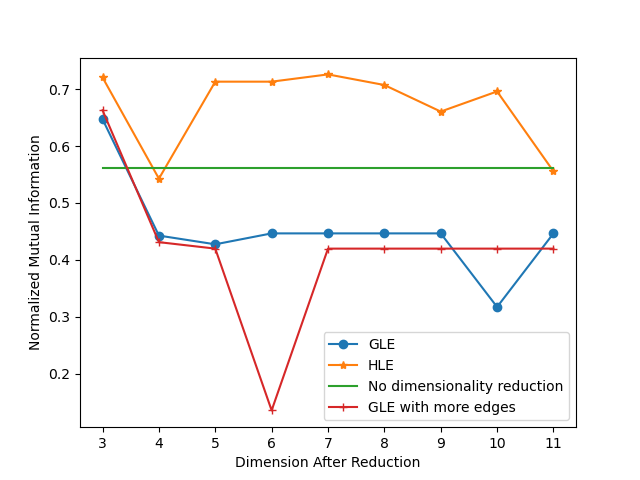}
        \caption{Influence of more edges on the Breast Cancer Wisconsin (Diagnostic) dataset.}       
    \label{fig4}  
    \end{figure}
    It can be seen that increasing the number of edges does not always improve the performance. This can also be seen in the results on the other two datasets, especially on the Movement of Libras dataset.
    \begin{figure}[H]
        \centering\includegraphics[width=7cm]{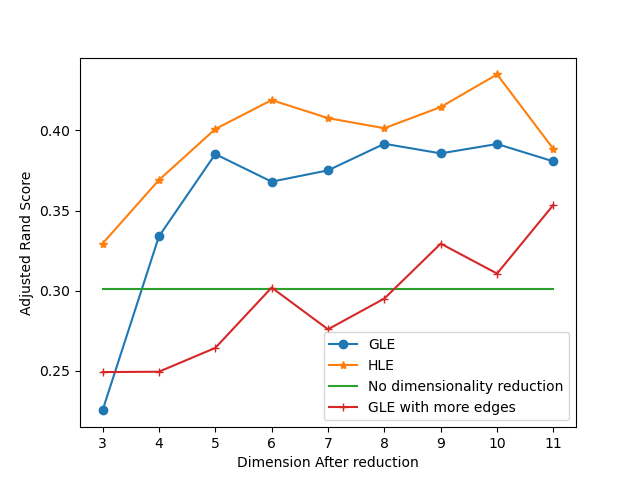}
        \centering\includegraphics[width=7cm]{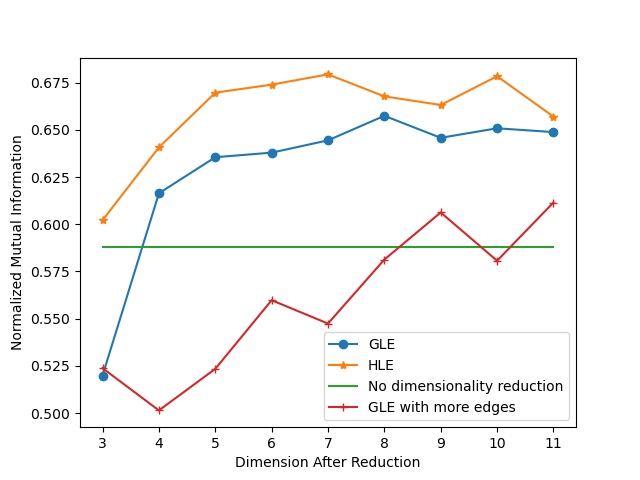}
        \caption{Influence of more edges on Movement of Libras dataset.}       
    \label{fig5}  
    \end{figure}

    \begin{figure}[H]
        \centering\includegraphics[width=7cm]{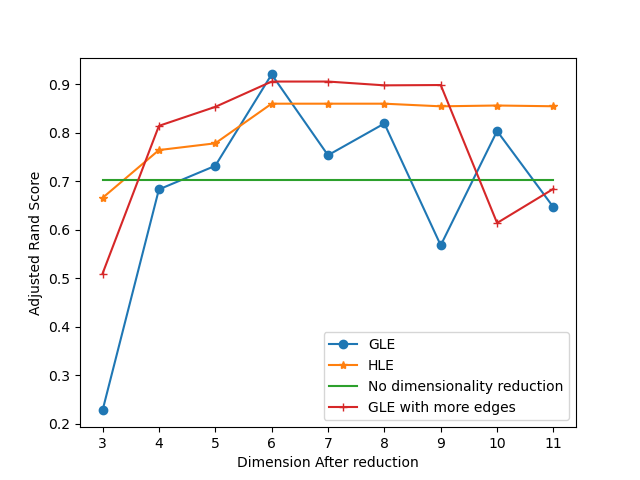}
        \centering\includegraphics[width=7cm]{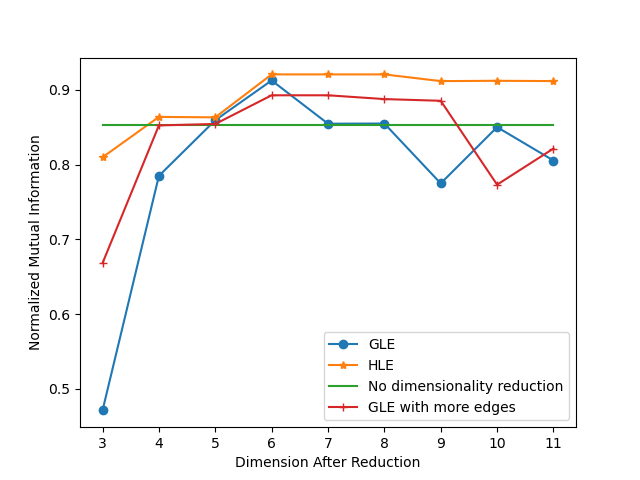}
        \caption{Influence of more edges on the Dermatology dataset.}       
    \label{fig6}  
    \end{figure}

    We also need to clarify the influence of the normalization item. In the following experiments we remove the normalization item. We also make an example of these two factors to figure out whether the improvement is due to a combined action.

    \begin{figure}[H]
        \centering\includegraphics[width=7cm] {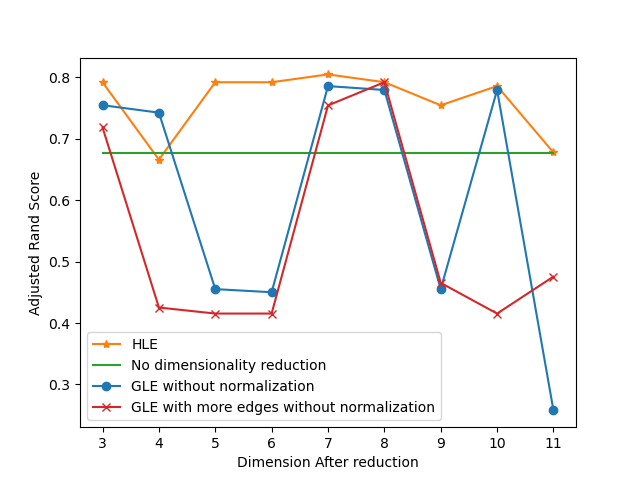}
        \centering\includegraphics[width=7cm]{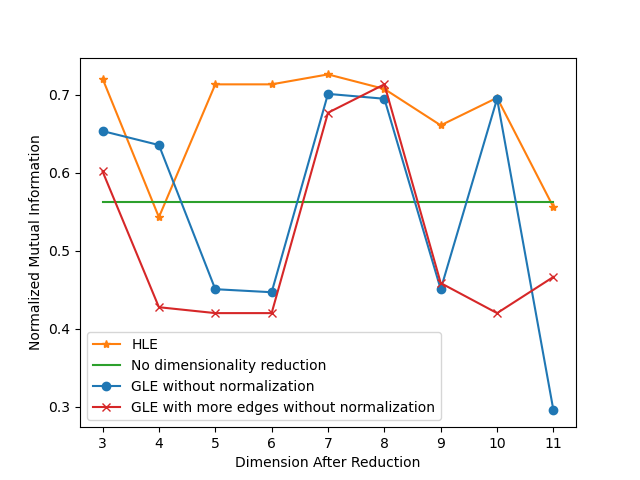}
        \caption{Influence of the normalization term on the Breast Cancer Wisconsin (Diagnostic) dataset.}       
    \label{fig7}  
    \end{figure}

    \begin{figure}[H]
        \centering\includegraphics[width=8cm]{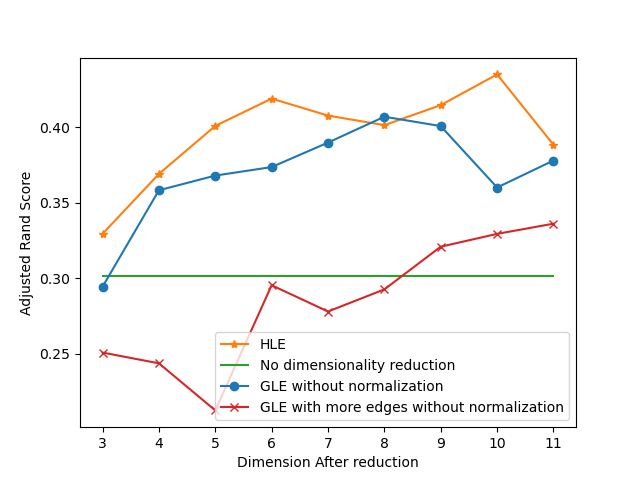}
        \centering\includegraphics[width=8cm]{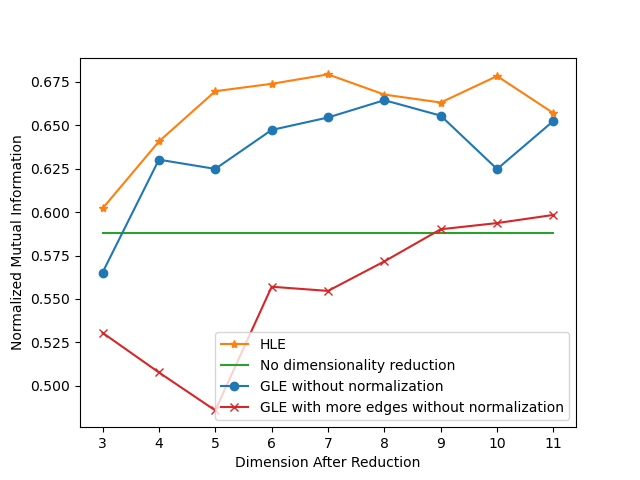}
        \caption{Influence of the normalization term on the Movement of Libras dataset.}       
    \label{fig8}  
    \end{figure}

    \begin{figure}[H]
        \centering\includegraphics[width=8cm]{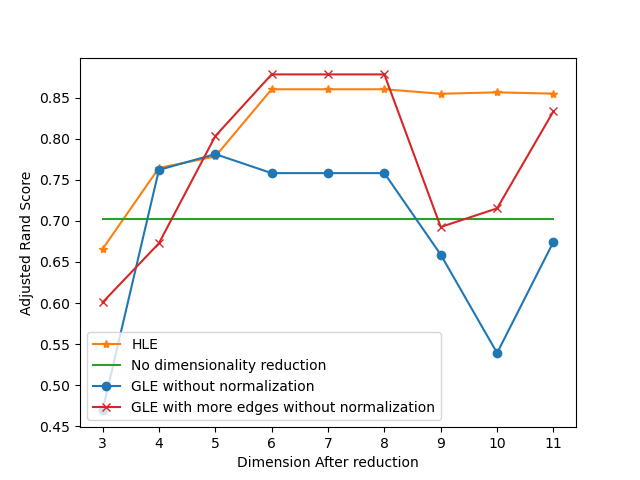}
        \centering\includegraphics[width=8cm]{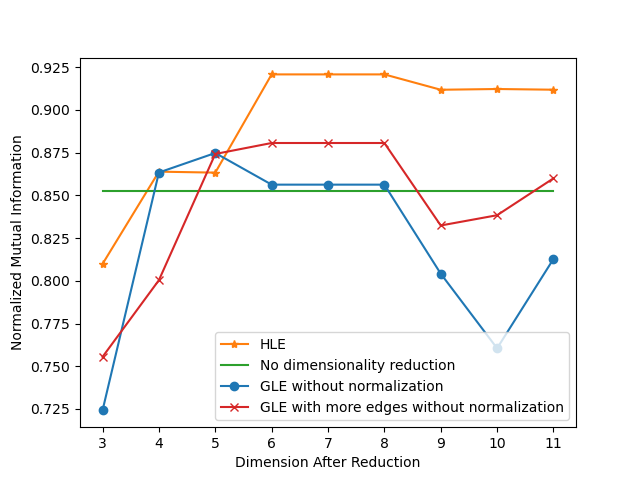}
        \caption{Influence of the normalization term on the Dermatology dataset.}       
    \label{fig9}  
    \end{figure}

    It can be seen that removing the normalization item is not so good as the hypergraph Laplacian eigenmap algorithm. Moreover, if we increase the number of edges in the graph Laplacian eigenmap algorithms without the normalization term, the result is still not so good as our hypergraph Laplacian eigenmap algorithm. This shows the improvement is not due to either one of these two factors, or a combined action. In conclusion, our structure provides a hypergraph Laplacian eigenmap algorithm, and have better performance than the classic graph Laplacian eigenmap algorithm.

\section*{Acknowledgements}
 The authors would like to thank the handling editor and
 the referee for their detailed comments.


{\small
\bibliographystyle{siam}
\bibliography{graph} 
}

\end{document}